\newtheorem{theorem}{Theorem}[section]
\newtheorem{definition}[theorem]{Definition}
\newtheorem{notion}[theorem]{}
\newtheorem{proposition}[theorem]{Proposition}
\newtheorem{corollary}[theorem]{Corollary}
\theoremstyle{definition}
\newtheorem{remark}[theorem]{Remark}
\newtheorem{example}[theorem]{Example}
\author{Qingnan An}
\address{School of Mathematics and Statistics, Northeast Normal University, Changchun, {\rm130024}, China}
\email{qingnanan1024@outlook.com}
\author{S\o ren Eilers}
\address{Department of Mathematical Sciences, University of Copenhagen, Copenhagen, Denmark}
\email{eilers@math.ku.dk}
\author{Guihua Gong}
\address{Department of Mathematics, Hebei Normal University, Shijiazhuang, Hebei {\rm050016}, China
and Department of Mathematics, University of Puerto Rico, Rio Piedras, PR 00936, USA}
\email{ghgong@gmail.com}
\author{Zhichao Liu}
\address{School of Mathematical Sciences,
Dalian University of Technology,
Dalian, {\rm116024}, China}
\email{lzc.12@outlook.com}
\keywords{Total K-theory; K-pure; ASH algebra; Real rank zero}
\subjclass[2000]{Primary 46L35, Secondary 46L80 19K35}
\begin{document}

\title[ASH algebras]
{Subhomogeneity in the classification of real rank zero ${\rm C}^*$-algebras}

\begin{abstract}
In this paper, we construct a class of ASH algebras of real rank zero and stable rank one which is not K-pure. Then we show the following:

(i) There exists a real rank zero inductive limit of 1-dimensional noncommutative CW complexes which is not an A$\mathcal{HD}$ algebra, when $K_1$ is torsion free or has bounded torsion.

(ii) Total K-theory is not a complete invariant for ASH algebras of real rank zero.

(iii) There are obstructions both in the total K-theory of ideals and quotients in the classification of $C^*$-algebras of real rank zero and stable rank one. 





\end{abstract}

\maketitle
\section{Introduction}
The Elliott classification programme has achieved significant success in the classification of $C^*$-algebras. This programme started with the classification of AT-algebras of real rank zero, in which Elliott classified these algebras in terms of the ordered $\mathrm{K}_{*}$-group \cite{Ell,EG}. Examples exhibited by Gong \cite{G} showed that  the ordered $\mathrm{K}_{*}$-group is not sufficient for the classification of (non-simple) real rank zero
approximate homogeneous $C^*$-algebras (AH algebras). Elliott, Gong, and Su \cite{EGS} constructed similar examples for AD algebras by employing AH algebras (indirectly).  Dadarlat and Loring \cite{DL3} also gave such an example for AD algebras directly. The effort of many mathematicians \cite{DG,DL2,DL3,Ei} led to a refined invariant, the total $\mathrm{K}$-theory group
$$
\underline{\mathrm{K}}(A)=\bigoplus_{n \geq 0} \mathrm{K}_{*}(A ; \mathbb{Z}_n)
$$
endowed with a certain order structure and acted upon by the natural coefficient transformations $\rho_n^i: \mathrm{K}_i(A) \rightarrow \mathrm{K}_i(A ; \mathbb{Z}_n),$
$ \kappa_{m, n}^i:  \mathrm{K}_i(A ; \mathbb{Z}_n) \rightarrow \mathrm{K}_i(A ; \mathbb{Z}_m)$ and by the Bockstein maps  $\beta_n^i: \mathrm{K}_i(A ; \mathbb{Z}_n) \rightarrow \mathrm{K}_{i+1}(A)$ studied in \cite{RS,S}.
The collection of all the maps $\rho, \beta, \kappa$ and their compositions are called Bockstein operations and  denoted by $\Lambda$.

The total K-theory $\underline{\mathrm{K}}(A)$ provides a better description of the connections between ideals of $A$. Eilers \cite{Ei} showed that $\underline{\mathrm{K}}(A)$ is a complete invariant for AD algebras with real rank zero and bounded torsion in $\mathrm{K}_1$. Later, Dadalat and Loring \cite{DL3} generalized this classification to all real rank zero AD algebras. In 1997, Dadarlat and Gong \cite{DG} presented an important classification of certain approximately subhomogeneous ${C}^*$-algebras of real rank zero with no dimension growth. (In \cite[section 1]{DG}, this class is called the ASH(2) class; in the subsequent work, this class is called the  A$\mathcal{HD}$ class, because the class includes all real rank zero AD algebras and AH algebras with slow dimension growth.) Recently, \cite{AELL, ALZ} push forward a classification for certain inductive limit of 1-NCCW complexes (it includes much more general subhomogeneous building blocks than \cite{DL3}).

Since the total K-theory can be defined using dimension drops algebras exclusively, it seems reasonable that these form a class inside 1-NCCW complexes which somehow generates everything. There are some results indicating these reductions analogous to AH algebras \cite{GJLP1}.
In 1999, Eilers, Loring and Pedersen \cite{ELP} showed that $A$ is an inductive limit of 1-NCCW complexes if and only if $A$ is local 1-NCCW complex, moreover, if $A$ has real rank zero and ${\rm K}_1(A)=0$, then $A$ is an AF algebra. But if ${\rm K}_1(A)$ is torsion free and non-zero, it is likely that $A$ is an AT algebra. Then there  arises the natural question of whether any real rank zero inductive limit of 1-NCCW complexes are AD algebras. Integrated with AH algebras, it is quite possible that the A$\mathcal{HD}$ class could cover all real rank zero ASH algebras with no dimension growth.

In this paper, we  give a negative answer to all these questions.
\begin{theorem} {\rm (Theorem \ref{ex1} and  Example \ref{ex torsion})}
There exists a real rank zero $C^*$-algebra $E$ which can be written as an inductive limit of 1-NCCW complexes but $E$ is not an A$\mathcal{HD}$ algebra. Such $E$ can have torsion free or bounded torsion $K_1$-group.
\end{theorem}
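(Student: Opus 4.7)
The plan is to construct $E$ explicitly as an inductive limit $\varinjlim (A_n,\phi_{n,n+1})$ in which each $A_n$ is a finite direct sum of matrix algebras over $\mathbb{C}$ and over dimension-drop interval algebras, i.e.\ the basic $1$-NCCW building blocks that already carry all the torsion needed to realise arbitrary coefficient data in total K-theory. The structural obstruction I intend to produce in the limit is precisely the failure of K-purity announced in the abstract: I expect to exhibit an ideal $I\triangleleft E$ such that the induced map $\underline{\mathrm{K}}(I)\to\underline{\mathrm{K}}(E)$ is not pure with respect to the Bockstein operations $\Lambda$. Since every A$\mathcal{HD}$ algebra of real rank zero is classified (via Dadarlat--Gong, and via \cite{Ei,DL3} in the AD case) by ordered total K-theory in a way whose naturality forces the six-term $\underline{\mathrm{K}}$-sequences of ideals and quotients to be pure, a non-K-pure limit cannot lie in the A$\mathcal{HD}$ class, and this is how I would rule out $E$ being A$\mathcal{HD}$.

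First I would choose the building blocks and partial multiplicities of the connecting maps so that the torsion generators of $\mathrm{K}_0$ of the dimension-drop fibres are matched in a pattern that produces a non-split (more precisely, non-pure) extension of Bockstein data in the limit. A natural way is to run two coordinated chains, an ``inner'' one describing the ideal and an ``outer'' one describing the quotient, and to twist their interaction by a combinatorial prescription analogous to Gong's obstruction in \cite{G} but lifted from homogeneous to $1$-NCCW building blocks; this is also exactly where the dimension-drop setting gives more flexibility than AH algebras, so one can genuinely escape the A$\mathcal{HD}$ class rather than just the AD or AH subclass.

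The main obstacle — and the step I expect to be the most delicate — is combining this non-K-pure pattern with real rank zero. For real rank zero I would use the standard eigenvalue/finite-spectrum approximation criterion for inductive limits of $1$-NCCW complexes, which forces the connecting maps of the dimension-drop summands to have many eigenvalue points distributed densely in the interval. Such density conditions tend to wash out the asymmetry needed for non-purity, so one routinely lands in the A$\mathcal{HD}$ class. The construction therefore has to be engineered so that the eigenvalue data is rich enough for real rank zero but the bookkeeping between the torsion parts of the two chains remains asymmetric; verifying both in the limit is the technical heart of the theorem.

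Finally, to obtain both the torsion-free and the bounded-torsion $\mathrm{K}_1$ versions I would vary the building blocks while leaving the obstructing Bockstein datum untouched. For torsion-free $\mathrm{K}_1$ one replaces the $\mathrm{K}_1$-carrying summands by circle algebras (or by dimension drops whose $\mathrm{K}_1$ contribution vanishes in the limit), confining all torsion interactions to $\mathrm{K}_0$; for bounded torsion one uses dimension-drop algebras of uniformly bounded orders and arranges the connecting maps to respect that bound. In each case the witness that $E\notin{\rm A}\mathcal{HD}$ remains the same non-pure $\underline{\mathrm{K}}$-morphism of ideal into algebra constructed in the first step, so the two variants of the theorem (Theorem \ref{ex1} and Example \ref{ex torsion}) follow from one underlying construction together with two choices of $\mathrm{K}_1$-carrying summands.
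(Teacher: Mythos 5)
There is a genuine gap at the very first step: you propose to take each building block $A_n$ to be a finite direct sum of matrix algebras over $\mathbb{C}$ and dimension-drop interval algebras. But finite direct sums of $M_n(\widetilde{\mathbb{I}}_p)$ and of homogeneous algebras are exactly the generating class of the A$\mathcal{HD}$ algebras (indeed of the AD algebras), so \emph{any} inductive limit of such blocks is an A$\mathcal{HD}$ algebra by definition, no matter how you twist the connecting maps. Your construction therefore cannot leave the class you are trying to escape. The whole point of the paper is that one must use ``odd'' $1$-NCCW complexes that are not direct sums of dimension drops: blocks $C_n=A(F_{1,n},F_{2,n},\varphi_{0,n},\varphi_{1,n})$ with $F_{1,n}=M_{3^n}^{\oplus 3}$, $F_{2,n}=M_{2\cdot 3^n}^{\oplus 2}$ and boundary multiplicities arranged so that $C_n$ contains a compact ideal $I_n$ (an interval algebra over $M_{2\cdot 3^n}$ with matched endpoints) for which the inclusion $\mathrm{K}_1(I_n)\to\mathrm{K}_1(C_n)$ is multiplication by $2$ on $\mathbb{Z}$ and the quotient is $M_{3^n}(\widetilde{\mathbb{I}}_2)$. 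The resulting non-pure sequence $0\to\mathbb{Z}\xrightarrow{\times 2}\mathbb{Z}\to\mathbb{Z}_2\to 0$ survives to the limit because the $\mathrm{K}_1$-maps of the connecting morphisms are the identity; no single dimension-drop or matrix summand can host such an ideal--quotient pair, so the phenomenon is invisible in your choice of blocks.

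Two smaller points. First, the obstruction the paper uses is not a failure of purity ``with respect to the Bockstein operations'' but the elementary fact that the ordinary $\mathrm{K}_1$-group extension of the ideal is not a pure extension of abelian groups, combined with the theorem of Dadarlat--Eilers that every real rank zero A$\mathcal{HD}$ algebra is K-pure; you should cite that result rather than appeal to naturality of the classification. Second, your plan for the torsion-free $\mathrm{K}_1$ case (confining all torsion to $\mathrm{K}_0$ via circle algebras) does not match what is needed: in the paper's example $\mathrm{K}_1(E)=\mathbb{Z}$ is torsion free while the quotient has $\mathrm{K}_1=\mathbb{Z}_2$, and it is precisely this torsion quotient of a torsion-free group that produces the non-pure $\mathrm{K}_1$-sequence; the bounded-torsion variant instead realises $0\to\mathbb{Z}_2\to\mathbb{Z}_4\to\mathbb{Z}_2\to 0$. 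Your treatment of real rank zero via dense eigenvalue distributions is the right idea and is not where the difficulty lies.
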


Our strategy is to construct non-K-pure real rank zero inductive limits of 1-NCCW complexes. We find there are many 1-NCCW complexes with ``odd" properties (Definition \ref{meiyuchou}) and we can use these odd blocks to obtain non-K-pure inductive limits. The K-pureness is a useful property to characterize the order structure of $C^*$-algebras. This property indicates the extent to which the total K-theory of ideals can be preserved through embeddings. In the non-K-pure case, there exists two $C^*$-algebras of real rank zero and stable rank one with the same total K-theory, but they have different total Cuntz semigroup \cite{ALcounter}. The key point is that the isomorphism between the total K-theory preserves all the Bockstein maps except the Dadalat-Gong order for ideals. It is believed that all ASH algebras of real rank zero can be classified by the total K-theory. We will use these ``odd" blocks to construct a counterexample in the ASH class.
\begin{theorem}  {\rm (Example \ref{ex2} and  Example \ref{ex Z})}
There exist two real rank zero $C^*$-algebras $E_1,E_2$ which can be written as  inductive limits of 1-NCCW complexes such that they have the same total K-theory (total Cuntz semigroup) but they are not isomorphic. These $E_1,E_2$ can be chosen to be 
$\mathcal{Z}$-stable.
\end{theorem}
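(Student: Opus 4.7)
The plan is to construct two inductive limits $E_1, E_2$ of 1-NCCW complexes whose total K-theory and total Cuntz semigroup agree as triples with unit and order, but whose underlying algebras are non-isomorphic, and then to upgrade the construction so that it is $\mathcal{Z}$-stable. Starting from the ``odd'' 1-NCCW blocks of Definition~\ref{meiyuchou}, which Theorem~\ref{ex1} already uses to produce non-K-pure real rank zero limits, I would design two inductive systems
\[
A_1^{(i)} \xrightarrow{\phi_1^{(i)}} A_2^{(i)} \xrightarrow{\phi_2^{(i)}} A_3^{(i)} \to \cdots , \qquad i=1,2,
\]
whose building blocks $A_n^{(i)}$ coincide stage by stage but whose connecting maps differ in how they distribute the odd blocks between a distinguished ideal and its complementary quotient. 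Real rank zero of each $E_i$ would be enforced by imposing the standard projection-lifting conditions on each $\phi_n^{(i)}$, using that finite direct sums of 1-NCCW complexes have abundant projections and that this property is preserved under a suitable choice of connecting maps, in the spirit of \cite{ELP}.

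To match the two invariants I would arrange the $\phi_n^{(i)}$ so that the induced maps on $\underline{\mathrm{K}}$ coincide as ordered $\Lambda$-modules with unit, and, more subtly, so that the induced maps on the total Cuntz semigroup also coincide. The latter is where a refinement of \cite{ALcounter} is required: in that construction the analogous data already produce non-isomorphic total Cuntz semigroups, so the multiplicities and supports of the odd pieces have to be balanced \emph{symmetrically} between $E_1$ and $E_2$, so that the soft/positive data captured by $A_n^{(i)}\otimes I_m$ for the relevant dimension-drop intervals $I_m$ agree asymptotically. The non-isomorphism $E_1\not\cong E_2$ would then be detected by the Dadarlat--Gong order on the total K-theory of a distinguished ideal, which the introduction identifies as precisely the information that a generic isomorphism of $\underline{\mathrm{K}}$ need not preserve; since by Theorem~\ref{ex1} these $E_i$ lie outside the A$\mathcal{HD}$ class, the Dadarlat--Gong classification does not apply and the ideal-level obstruction is genuine.

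For the $\mathcal{Z}$-stable version of Example~\ref{ex Z}, I would either tensor the whole construction by $\mathcal{Z}$ and check that the distinguishing data survive (the ideal lattice is preserved because $\mathcal{Z}$ is simple, and $\underline{\mathrm{K}}$ and the total Cuntz semigroup are unchanged since $\mathcal{Z}$ is KK-equivalent to $\mathbb{C}$ and strongly self-absorbing), or calibrate the multiplicities in the connecting maps to force $\mathcal{Z}$-stability directly via a slow dimension growth estimate. The main obstacle I anticipate is the middle step: simultaneously matching the total Cuntz semigroups while keeping the ideal-level distinction alive. The calculation of \cite{ALcounter} already produces the Cuntz-semigroup mismatch; reversing it requires a symmetric combinatorial design of the two systems together with a stage-by-stage intertwining argument, modelled on the Dadarlat--Gong computation in \cite{DG}, to confirm that the resulting total Cuntz semigroups are indeed isomorphic even though the ordered total K-theory of the ideals is not.
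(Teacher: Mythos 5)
Your proposal has the right general shape (two inductive systems with identical building blocks built from the odd blocks of Theorem~\ref{ex1}, differing only in the connecting maps), but it misses the two ideas that actually make the paper's argument work, and in one place it points in the opposite direction. First, you locate the distinguishing invariant in ``the Dadarlat--Gong order on the total K-theory of a distinguished ideal.'' The paper's construction is engineered so that precisely this ideal-level data \emph{cannot} distinguish $E_1$ from $E_2$: the common ideal $I$ (the stable AT algebra of Theorem~\ref{ex1}) and the quotients are K-pure, so the total Cuntz semigroup --- which is exactly the ``ideal version'' of ordered total K-theory --- is the same for both (Remark~\ref{Cutotal}). The non-isomorphism is detected in the \emph{quotients}: $E_i/I\cong A_i$, where $A_1\not\cong A_2$ are the Dadarlat--Loring algebras of \cite[Theorem 3.3]{DL3}, distinguished by the order on mod-$2$ K-theory. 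Your alternative justification (``the Dadarlat--Gong classification does not apply, so the ideal-level obstruction is genuine'') is not an argument: failure of a classification theorem to apply never by itself yields non-isomorphism; one needs an explicit invariant, and here it is the quotient. Second, the matching of $\underline{\mathrm{K}}$ (with order and unit) does not require the delicate symmetric intertwining you anticipate: the two connecting maps differ only in whether the point evaluation $\theta_1$ or $\theta_2$ of $C_{n+1}$ is fed into the new finite-dimensional summand, and since $f(0)$ and $f(1)$ are homotopic through $f(t)$ inside $C_{n+1}$ one gets $\mathrm{KK}(\rho_{n,n+1})=\mathrm{KK}(\rho'_{n,n+1})$; the UMCT of \cite{DL2} then gives the isomorphism of ordered scaled total K-theory for free.

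The $\mathcal{Z}$-stability step also has a gap. Tensoring $E_i$ with $\mathcal{Z}$ would preserve the K-theoretic data, but it does not obviously preserve the property of being an inductive limit of \emph{1-dimensional} NCCW complexes, which is part of the statement being proved ($\mathcal{Z}$ is built from dimension drop intervals, and tensoring a 1-NCCW complex with an interval algebra raises the dimension). The paper instead keeps the construction inside the 1-NCCW class by replacing the finite-dimensional summands $D_n$ with $D_n'=(M_{3^n})^{\oplus(2n+1)}$ and using multiplicity-three connecting maps, so that the matrix-algebra ideals of the quotients become copies of $M_{3^\infty}$; then $A_i'\cong A_i\otimes M_{3^\infty}$ and $I$ are $\mathcal{Z}$-stable, and $\mathcal{Z}$-stability of $E_i'$ follows from the extension result in \cite[4.3]{TW}. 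One must also re-verify that the Dadarlat--Loring distinguishing argument survives this modification (it does, with $\mathbb{Z}[\frac{1}{3}]$ replacing $\mathbb{Z}$ in the relevant product group), a point your proposal does not address.
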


  In our construction, $E_1$ and $E_2$ have isomorphic ideals and ideal lattice structure, but their quotients are non-isomorphic. It is shown in \cite{ALcounter} that there exist obstructions in the total K-theory of ideals,  the total Cuntz semigroup would catch all the information of ideals.
  Our result will indicate that there exist obstructions in the total K-theory of quotients when the quotient is not simple. 
  Combining with the total K-theory of ideals and  quotients, we are able to give a classification.
  \begin{theorem} {\rm (Theorem \ref{mainthm})}
Given A$\mathcal{HD}$ algebras $A_i,B_i,$ $i=1,2$  of stable rank one and real rank zero with $A_i$ unital and $B_i$ stable and
 unital full extensions with trivial boundary maps:
$$
0\to B_i\to E_i\to A_i\to 0,\quad i=1,2.
$$
If the following diagram is commutative in $\Lambda$-category
$$\xymatrixcolsep{2pc}
\xymatrix{
 {\,\,\underline{\mathrm{K}}(B_1)\,\,} \ar[d]_-{\cong_+}\ar[r]^-{\underline{\mathrm{K}}(\iota_1)}
& {\,\,(\underline{\mathrm{K}}(E_1),[1_{E_1}])\,\,} \ar[d]_-{\cong_+} \ar[r]^-{\underline{\mathrm{K}}(\pi_1)}
& {\,\,(\underline{\mathrm{K}}(A_1),[1_{A_1}])\,\,}\ar[d]_-{\cong_+}\\
{\,\,\underline{\mathrm{K}}(B_2)\,\,} \ar[r]_-{\underline{\mathrm{K}}(\iota_2)}
& {\,\,(\underline{\mathrm{K}}(E_2),[1_{E_2}]) \,\,} \ar[r]_-{\underline{\mathrm{K}}(\pi_2)}
& {\,\,(\underline{\mathrm{K}}(A_2),[1_{A_2}]) \,\,},}
$$
where $\cong_+$ means the ordered scaled isomorphism, then
 $E_1\cong E_2.$
  \end{theorem}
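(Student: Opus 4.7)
The plan is a two-step existence--uniqueness argument: first classify the ideals and quotients via Dadarlat--Gong, then show the extension class is preserved. By the Dadarlat--Gong theorem \cite{DG}, the ordered scaled $\Lambda$-isomorphisms on the outer columns lift to $*$-isomorphisms $\phi\colon A_1\to A_2$ and $\psi\colon B_1\to B_2$. Replacing $E_2$ by its pullback along $\phi$ and $\psi$, we reduce to the case $A_1=A_2=A$, $B_1=B_2=B$, where the given middle $\Lambda$-isomorphism $\kappa\colon\underline{\mathrm{K}}(E_1)\to\underline{\mathrm{K}}(E_2)$ restricts to the identity on $\underline{\mathrm{K}}(B)$ and covers the identity on $\underline{\mathrm{K}}(A)$.

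Because the boundary maps vanish, the six-term sequences with $\mathbb{Z}_n$-coefficients split into short exact sequences
$$
0\longrightarrow\underline{\mathrm{K}}(B)\longrightarrow\underline{\mathrm{K}}(E_i)\longrightarrow\underline{\mathrm{K}}(A)\longrightarrow 0
$$
in the $\Lambda$-category, and the commutative diagram identifies these two $\Lambda$-module extensions. Via a universal multicoefficient theorem suited to the A$\mathcal{HD}$ class, this algebraic identification promotes to an equality of the two $C^*$-algebra extension classes in the appropriate extension group. Since each $E_i$ is a unital full extension with $B$ stable, an absorption/uniqueness result for full essential extensions of real rank zero then produces a $*$-isomorphism $E_1\cong E_2$. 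Concretely, in the inductive-limit framework of the paper, this last step is carried out by an approximate intertwining of $E_1$ and $E_2$ realized as inductive limits of extensions of 1-NCCW building blocks, matching them stage by stage using uniqueness and existence theorems at each finite level.

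The main obstacle is translating the abstract $\Lambda$-category data into an analytic identification of extension classes realizable by a $*$-isomorphism: a UMCT valid for the class is needed to bridge the algebraic and analytic sides, and one must propagate the order and scale throughout the intertwining in the presence of torsion in $\mathrm{K}_1$. Crucially, the counterexamples constructed earlier in the paper show that the middle $\Lambda$-isomorphism $\kappa$ alone does not suffice; the proof must genuinely use all three isomorphisms together---the ordered isomorphism on $\underline{\mathrm{K}}(B)$ and on $\underline{\mathrm{K}}(A)$ in addition to the $\Lambda$-isomorphism on $\underline{\mathrm{K}}(E_i)$---to reconstruct the full extension. This is where the paper's analysis of K-purity and of the Dadarlat--Gong order for ideals enters decisively.
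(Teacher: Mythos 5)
Your overall skeleton matches the paper's: use Dadarlat--Gong to lift the two outer isomorphisms to $*$-isomorphisms $\phi_0\colon B_1\to B_2$ and $\phi_1\colon A_1\to A_2$, reduce to two unital full extensions of the same quotient by the same ideal, and finish with an Ext-classification plus absorption. But there is a genuine gap in your middle step. You claim that, since the boundary maps vanish, the sequences
$$
0\longrightarrow\underline{\mathrm{K}}(B)\longrightarrow\underline{\mathrm{K}}(E_i)\longrightarrow\underline{\mathrm{K}}(A)\longrightarrow 0
$$
are short exact in the $\Lambda$-category, and you then propose to compare the two resulting ``$\Lambda$-module extensions'' via a UMCT. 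This premise is false: vanishing of the integral boundary maps $\delta_0,\delta_1$ does not force the boundary maps with $\mathbb{Z}_n$-coefficients to vanish, and the failure is exactly the non-K-pure phenomenon the paper is built around. Concretely, for the extension $0\to I\to E\to E/I\to 0$ of Theorem \ref{ex1}, the $\mathrm{K}_1$-inclusion is $\mathbb{Z}\xrightarrow{\times 2}\mathbb{Z}$, so $\mathrm{K}_1(\iota;\mathbb{Z}_2)$ is the zero map on $\mathrm{K}_1(I;\mathbb{Z}_2)\cong\mathbb{Z}_2$ and the mod-$2$ index map $\mathrm{K}_0(E/I;\mathbb{Z}_2)\to\mathrm{K}_1(I;\mathbb{Z}_2)$ is onto; thus $\underline{\mathrm{K}}(\iota)$ is not injective and $\underline{\mathrm{K}}(\pi)$ is not surjective. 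For the extensions covered by Theorem \ref{mainthm} (which are not assumed K-pure) the objects you want to compare simply do not exist, so the UMCT step has nothing to act on.

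The paper avoids this by throwing away the coefficient groups at precisely this point. Restricting the hypothesis to ordinary $\mathrm{K}_*$, where trivial boundary maps \emph{do} give short exact sequences, one obtains a commutative ladder with identity maps on $\mathrm{K}_*(B_1)$ and $\mathrm{K}_*(A_2)$ comparing $0\to B_1\xrightarrow{\iota_1}E_1\xrightarrow{\phi_1\circ\pi_1}A_2\to 0$ with $0\to B_1\xrightarrow{\iota_2\circ\phi_0}E_2\xrightarrow{\pi_2}A_2\to 0$; both classes lie in the subgroup $\mathrm{Ext}_{[1]}(\mathrm{K}_*(A_2),\mathrm{K}_*(B_1))$ of Wei's sequence (Theorem \ref{strong wei}) and the ladder shows they coincide there, so the extensions are stably strongly unitarily equivalent. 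The corona factorization property and fullness (Propositions \ref{AL CFP} and \ref{AL full}) make both extensions absorbing, upgrading this to strong unitary equivalence and hence $E_1\cong E_2$. In other words, the total K-theory is used only to manufacture $\phi_0$ and $\phi_1$; the final step is a $\mathrm{K}_*$-level Ext argument, not a UMCT for the middle algebra. Your closing suggestion of an approximate intertwining of 1-NCCW building blocks is also not available here, since the $E_i$ in this theorem are abstract extensions and are not assumed to be such inductive limits.
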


 We remark that  if the $C^*$-algebra is K-pure, these obstructions from ideals and quotients will vanish. It is expected that any K-pure real rank zero inductive limits of 1-dimensional NCCW complexes are AD algebras.

\section{Preliminaries}

\begin{definition}\label{ET}\rm
Let $F_1$ and $F_2$ be two finite dimensional ${C}^*$-algebras and let
$\varphi_0,\,\varphi_1:\,F_1\to F_2$ be two homomorphisms.
Set
\begin{align*}
A&=A(F_1,F_2,\varphi_0,\varphi_1)
\\
&=\{(f,a)\in  C([0,1],F_2) \oplus F_1:\,f(0)=\varphi_0(a)\,  {\rm and}\, f(1)=\varphi_1(a)\}.
\end{align*}
These ${\mathrm C}^*$-algebras are called 1-dimensional non-commutative finite CW (1-NCCW) complexes (see \cite{Ell:1996, ET:1994}). So all the 1-NCCW complexes form a subclass of subhomogenous ${\mathrm C}^*$-algebras.
\end{definition}
\begin{proposition} \label{k nccw} {\rm (}\cite{GLN1}{\rm )}
  Let $A=A(F_1,F_2,\varphi_0,\varphi_1)$ 
  and let $\varphi_{0*},\varphi_{1*}:{\rm K}_0(F_1)=\mathbb{Z}^p\rightarrow {\rm K}_0(F_2)=\mathbb{Z}^l$ be represented by matrices $\alpha=({\alpha_{ij}})_{l\times p}$ and $\beta=({\beta_{ij}})_{l\times p}$, where $\alpha_{ij},\beta_{ij}\in \mathbb{Z}_+$ for each pair $i,j$.
  Then
  $$
  {\rm K}_0(A)={\rm Ker}(\alpha -\beta),
\quad
{\rm K}_0^+(A)={\rm Ker}(\alpha -\beta)\cap \mathbb{Z}_+^p,
\quad
 {\rm K}_1(A)=\mathbb{Z}^l/{\rm Im}(\alpha-\beta).
 $$

\end{proposition}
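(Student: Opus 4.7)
The plan is to analyze $A$ via the short exact sequence
$$0 \to SF_2 \to A \xrightarrow{\pi} F_1 \to 0,$$
where $\pi(f,a)=a$ and the kernel consists of pairs $(f,0)$ with $f(0)=f(1)=0$, naturally isomorphic to $SF_2 = C_0((0,1),F_2)$. Applying the six-term exact sequence and using $K_0(F_i)=\mathbb{Z}^{p}$ or $\mathbb{Z}^{l}$, $K_1(F_i)=0$, together with the suspension isomorphisms $K_0(SF_2)\cong K_1(F_2)=0$ and $K_1(SF_2)\cong K_0(F_2)=\mathbb{Z}^l$, the sequence collapses to
$$0 \to K_0(A) \to \mathbb{Z}^p \xrightarrow{\partial} \mathbb{Z}^l \to K_1(A) \to 0.$$
The desired expressions for $K_0(A)$ and $K_1(A)$ will follow at once once we identify $\partial$ with $\alpha-\beta$ (up to sign, which is irrelevant for kernel and image).

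To pin down $\partial$, I would use the pullback description: $A$ is the pullback of $F_1 \xrightarrow{(\varphi_0,\varphi_1)} F_2\oplus F_2 \xleftarrow{(\mathrm{ev}_0,\mathrm{ev}_1)} C([0,1],F_2)$, so up to a mapping-cone rewriting $A$ is (stably) the mapping cone of $\varphi_0-\varphi_1\colon F_1\to F_2$. By the standard Puppe/mapping-cone computation the induced boundary in $K$-theory is precisely $\varphi_{0*}-\varphi_{1*}=\alpha-\beta$. A direct verification is also possible: lift the generators of $K_0(F_1)$ to projections in $A$ by taking any path in $C([0,1],F_2)$ connecting $\varphi_0(e_j)$ to $\varphi_1(e_j)$, and read off the Bott element from the endpoint discrepancy. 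This step is the main technical point of the proposition.

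For the positive cone, the inclusion $K_0^+(A)\subseteq \ker(\alpha-\beta)\cap\mathbb{Z}_+^p$ is immediate since $\pi_*\colon K_0(A)\to K_0(F_1)=\mathbb{Z}^p$ is positive. For the reverse inclusion, given $x\in\ker(\alpha-\beta)\cap\mathbb{Z}_+^p$, choose a projection $a\in M_n(F_1)$ with $[a]=x$. The condition $(\alpha-\beta)(x)=0$ means $[\varphi_0(a)]=[\varphi_1(a)]$ in $K_0(M_n(F_2))$, so after stabilizing within $F_2$ (which is finite-dimensional and therefore has cancellation and connected unitary groups on the relevant matrix blocks) there is a unitary path $u_t$ in $M_n(\widetilde{F_2})$ with $u_0=1$ and $u_1\varphi_0(a)u_1^{*}=\varphi_1(a)$. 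Then $f(t):=u_t\varphi_0(a)u_t^{*}$ defines a projection-valued path with the correct boundary values, so $(f,a)\in M_n(A)$ is a projection representing a class in $K_0^+(A)$ that maps to $x$.

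The hardest part is the identification of the boundary map $\partial=\alpha-\beta$; everything else is formal manipulation of six-term exact sequences and a standard lifting argument for projections. In fact this identification is already contained in \cite{ELP} and \cite{ET:1994}, and the proposition may be cited from \cite{GLN1} directly.
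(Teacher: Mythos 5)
Your argument is correct. The paper itself offers no proof of this proposition --- it is quoted directly from \cite{GLN1} --- so there is nothing internal to compare against; but your route (the extension $0\to SF_2\to A\to F_1\to 0$, collapsing the six-term sequence using $K_1(F_i)=0$ and the suspension isomorphism, identifying the exponential map $K_0(F_1)\to K_1(SF_2)\cong K_0(F_2)$ with $\varphi_{0*}-\varphi_{1*}$, and then the projection-lifting argument for the positive cone) is exactly the standard proof in the Elliott--Thomsen/1-NCCW literature and is the one behind the cited reference. Two small remarks. First, the phrase ``$A$ is (stably) the mapping cone of $\varphi_0-\varphi_1$'' is informal, since $\varphi_0-\varphi_1$ is not a $*$-homomorphism; the clean statement is that $A$ is the mapping cone of $(\varphi_0,\varphi_1)\colon F_1\to F_2\oplus F_2$ relative to the evaluation maps, and the boundary map is then computed directly as you indicate in your ``direct verification'' sentence --- that second route is the one to keep. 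Second, in the positive-cone argument no stabilization is actually needed: $\varphi_0(a)$ and $\varphi_1(a)$ are projections in $M_n(F_2)$ with equal rank in each matrix block, hence conjugate by a unitary of $M_n(F_2)$, whose unitary group is connected; your path $u_t\varphi_0(a)u_t^*$ then produces the required projection in $M_n(A)$. One can sanity-check the sign-irrelevant identification against the paper's own computation of $K_1(C_n)$ in Theorem \ref{ex1}, where $\alpha-\beta$ is exactly the displayed $2\times 3$ matrix.
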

\begin{definition} \rm
Suppose that
 $F_1=\bigoplus_{j=1}^p M_{k_j}(\mathbb{C})$, $F_2$ $=\bigoplus_{i=1}^lM_{h_i}(\mathbb{C})$.
  Let us use $\theta_1,\theta_2,\cdots,\theta_p$ to denote the spectrum of $F_1$ and denote the spectrum of $C([0,1],F_2)$ by $(t,i)$,
 where $0\leq t\leq1$ and $i\in \{1,2,\cdots,l\}$ indicates that it is in $i^{th}$ block of $F_2$. So
  $$
  {\rm Prim}(A)=\{\theta_1,\theta_2,\cdots,\theta_p\}\cup \coprod_{i=1}^{l}(0,1)_i
  $$
  the topology base at each point $\theta_j$ is given by
  $$
  \{\theta_j\}\cup\coprod_{\{i|\alpha_{ij}\neq0\}}(0,\varepsilon)_i\cup\coprod_{\{i|\beta_{ij}\neq0\}}(1-\varepsilon,1)_i.
  $$
  In general, this is a non Hausdorff topology.

 Writing $a\in F_1$ as $a=(a(\theta_1),a(\theta_2)$, $\cdots,$ $a(\theta_p))$,
  $f(t)\in C([0,1],F_2)$ as $$f(t)=(f(t,1),f(t,2),\cdots,f(t,l))$$ where $a(\theta_j)\in M_{k_j}(\mathbb{C})$,
  $f(t,i)\in C([0,1],M_{h_i}(\mathbb{C}))$.

  For any $(f,a)\in A$, 
  there is a canonical map $\pi_e:\,A\rightarrow F_1$ defined by $\pi_e((f,a))=a$.
\end{definition}

\begin{definition}\rm
 We shall say a ${C}^*$-algebra is an A$\mathcal{HD}$ algebra (see \cite[2.2-2.3]{GJL} and  \cite{GJL2}) if it is an inductive limit of finite direct sums of algebras $M_n(\widetilde{\mathbb{I}}_p)$ and $PM_n(C(X))P$, where $$
\mathbb{I}_p=\{f\in M_p(C_0(0,1]):\,f(1)=\lambda\cdot1_p,\,1_p {\rm \,is\, the\, identity\, of}\, M_p\}
$$
and $X$ is one of the following finite connected CW complexes: $\{pt\},~\mathbb{T},~[0, 1],~T_{II,k}.$ Here, $P\in M_n(C(X))$ is a projection and $T_{II,k}$ is the 2-dimensional connected simplicial complex with $H^1(T_{II,k})=0$ and $H^2(T_{II,k})=\mathbb{Z}/k\mathbb{Z}$. (This class of ${\rm C}^*$-algebras also plays an important role in the classification of ${\rm C}^*$-algebras with the  ideal property \cite{GJLP1,GJLP2}.)
\end{definition}

\begin{definition}\rm  \label{def kpure}
Let $A$ and $B$ be ${C}^*$-algebras. An extension $e$ of $A$ by $B$ is a short exact sequence of ${\rm C}^*$-algebras:
 $$
e:\quad 0 \rightarrow B \rightarrow E \rightarrow A \rightarrow 0.
$$
We say $e$ is $stenotic$ if $I\subset B$ or $B\subset I$ for any ideal $I$ of $E$.  We say $e$ is ${K}$-$pure$,
if both the sequences
$$0\to  {\rm K}_j(B ) \to {\rm K}_j(E)\to {\rm K}_j(A ) \to 0,\quad j=0,1$$
are pure extensions of abelian groups.

We say a ${\rm C}^*$-algebra $E$ is $K$-$pure$, if for any ideal $I$ of $E$, the extension
$
0\to I\to E\to E/I\to 0
$
is ${K}$-pure.
\end{definition}

\begin{definition}\label{trivial boundary def}\rm
We say an extension $e:0\to B\to E\to A\to 0$ has $trivial$ $boundary$ $maps$, if both  $\delta_0$ and $\delta_1$ in the six-term exact sequence are the zero map.

Thus, suppose both $A,B$ are of stable rank one and real
rank zero; by \cite[Proposition 4]{LR}, $E$ has stable rank one and real
rank zero if, and only if, $e$ has trivial boundary maps.
\end{definition}

\begin{definition}\rm
Let $A$, $B$ be ${ C}^*$-algebras and let $e$
be an extension of $A$ by $B$ with Busby invariant $\tau:\,A\to M(B)/B$, where $M(B)$ is the multiplier algebra of $B$.
We say the extension $e$ is $full$, if $\tau$ is full.
When $A$ is unital, we say an extension $e$ is $unital$, if $\tau$ is unital.
\end{definition}

\begin{definition}\rm \label{strong ue}
Let
$e_i:0 \to B \to E_i \to A \to 0 $
be two
extensions of $A$ by $B$ with Busby invariants $\tau_i$ for $i = 1, 2$. We say $e_1$ and $e_2$ are  $strongly$ $unitarily$ $equivalent$, denoted by $e_1
\sim_s e_2$,
if there exists a unitary $u \in M(B)$ such that $\tau_2(a) = \pi(u)\tau_1(a)\pi(u)^*$ for all $a \in A$,
where $\pi:\,M(B)\to M(B)/B.$ Two unital extensions $e_1$ and $e_2$ are called $stably$ $strongly$ $unitarily$ $equivalent$, denoted by $e_1
\sim_{ss} e_2$, if there exist unital trivial extensions $\sigma_1,\sigma_2$  such that $e_1\oplus\sigma_1 \sim_s e_2\oplus\sigma_2$.

If $A$ is unital,  denote  $\mathrm{Ext}^u_{s}(A, B)$ the set of strongly unitary equivalence classes of unital extensions of $A$ by $B$
and  denote  $\mathrm{Ext}^u_{ss}(A, B)$ the set of stably strongly unitary equivalence classes of unital extensions of $A$ by $B$.


\end{definition}

We will also need the following \cite[Theorem 4.9]{W}; one can also see analogous versions in the circumstances of \cite{GR,Sk1,Sk2,W0}.

\begin{theorem}{\rm (}\cite[Theorem 4.9]{W}{\rm )}\label{strong wei}
Suppose that $A$ is a unital separable nuclear ${C}^*$-algebra with $A\in\mathcal{N}$ and $B$ is separable stable ${C}^*$-algebra and has a countable approximate unit of projections. Then
there is a short exact sequence of groups
$$
0 \to \mathrm{Ext}_{[1]}(\mathrm{K}_*(A), \mathrm{K}_*(B)) \to \mathrm{Ext}_{ss}^u(A, B) \to {\rm Hom}_{[1]}(\mathrm{K}_*(A), \mathrm{K}_*(B)) \to 0.
$$
\end{theorem}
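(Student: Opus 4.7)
The plan is to adapt the Rosenberg--Schochet UCT, in its picture for extensions via Paschke--Kasparov duality, to the unital stably strong equivalence setting. The map $\gamma: \mathrm{Ext}^u_{ss}(A, B) \to \mathrm{Hom}_{[1]}(\mathrm{K}_*(A), \mathrm{K}_*(B))$ sends a unital extension $e$ to the K-theoretic datum of its Busby invariant $\tau:A\to M(B)/B$, carrying the extra information that $\tau(1_A)$ is the unit of $M(B)/B$; this is what the subscript $[1]$ records at the level of K-groups. Well-definedness on stable strong unitary classes is automatic: conjugation by a unitary in $M(B)$ and addition of a unital trivial extension do not alter the induced K-theoretic map.

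First I would recall the non-unital picture. Under the standing hypotheses ($A\in\mathcal{N}$ separable nuclear, $B$ separable stable with a countable approximate unit of projections), the classical BDF--Kasparov identification $\mathrm{Ext}_{ss}(A,B)\cong\mathrm{KK}^1(A,B)$ is available, and combined with \cite{RS} yields
\begin{equation*}
0 \to \mathrm{Ext}(\mathrm{K}_*(A), \mathrm{K}_{*+1}(B)) \to \mathrm{KK}^1(A, B) \to \mathrm{Hom}(\mathrm{K}_*(A), \mathrm{K}_{*+1}(B)) \to 0.
\end{equation*}
The countable approximate unit of projections in $B$ is what allows abstract KK-elements to be realized by genuine essential extensions whose multiplier algebras carry enough projections to absorb unit data.

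Next I would pass from the non-unital to the unital version by exploiting the distinguished triangle coming from the unital inclusion $\mathbb{C}\hookrightarrow A$. Applying $\mathrm{KK}^1(\cdot,B)$ produces a Puppe-type exact sequence which, under UCT applied to both $A$ and $\mathbb{C}$, becomes a commutative ladder whose middle row is the classical UCT sequence above and whose terms for $\mathbb{C}$ record precisely the unit data. The groups $\mathrm{Hom}_{[1]}$ and $\mathrm{Ext}_{[1]}$ can then be identified with the appropriate kernels/cokernels in this ladder, and a diagram chase (five-lemma style) extracts the desired unital short exact sequence. Surjectivity of $\gamma$ reduces, via this diagram, to the surjectivity of the classical UCT map together with the ability to unitize a given Busby invariant; injectivity on the kernel side identifies it with $\mathrm{Ext}_{[1]}$ via the UCT boundary map.

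The main obstacle is the lifting step in surjectivity: given an element $\eta\in\mathrm{Hom}_{[1]}(\mathrm{K}_*(A),\mathrm{K}_*(B))$, one must produce a \emph{unital} essential extension whose KK-class maps to $\eta$, not merely a KK-element. Concretely this demands perturbing a Busby invariant $\tau_0$ obtained from the non-unital UCT into a unital $\tau$ without disturbing its KK-class, which in turn requires producing a projection in $M(B)$ of the prescribed $\mathrm{K}_0$-class and conjugating into its corner. This is where the hypothesis on $B$ admitting a countable approximate unit of projections is indispensable, and it constitutes the most delicate technical step of the argument. Once the lift is in place, the remainder of the exactness assertions follow formally from naturality and the classical UCT.
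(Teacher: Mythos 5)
This theorem is not proved in the paper at all: it is imported verbatim as \cite[Theorem 4.9]{W}, so there is no in-paper argument to compare yours against. Judged on its own, your outline follows the recognizable strategy of the unital-Ext UCT literature --- in fact it is closer in spirit to the mapping-cone approach of Gabe--Ruiz \cite{GR} (identify $\mathrm{Ext}^u_{ss}(A,B)$ with a KK-group of the cone of the unital inclusion $\mathbb{C}\hookrightarrow A$, then run the ordinary UCT through the resulting Puppe ladder) than to Wei's own treatment, which works more directly with absorbing unital extensions in the style of Skandalis's strong Ext bifunctor. Either route can be made to work, so the choice of approach is not the problem.

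The problem is that the two steps carrying all the content are asserted rather than carried out. First, the identification $\mathrm{Ext}_{ss}(A,B)\cong\mathrm{KK}^1(A,B)$ that you take as the starting point is itself a nontrivial absorption theorem: for non-simple $A$ one needs a Voiculescu/Kasparov-type result guaranteeing that trivial (and in the unital case, \emph{unital} trivial) absorbing extensions exist and that every class is represented by one; the ``countable approximate unit of projections'' hypothesis enters exactly here, and you do not explain how. Second, and more seriously, you explicitly flag the surjectivity step --- realizing a prescribed $\eta\in\mathrm{Hom}_{[1]}$ by a genuinely \emph{unital} Busby invariant, which requires producing a projection in $M(B)$ with prescribed $\mathrm{K}_0$-data and showing the constraint $\eta([1_A])=0$ is exactly what makes this possible --- as ``the most delicate technical step'' and then stop. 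Likewise the identification of the kernel of $\gamma$ with $\mathrm{Ext}_{[1]}$ (which encodes the unit class, not just the bare $\mathrm{Ext}$ group) is announced as the output of a diagram chase that is never performed. As it stands this is a credible roadmap, not a proof; the theorem's substance lives precisely in the steps you have deferred.
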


\begin{proposition}{\rm (}\cite[Lemma 3.3]{{ALpams}}{\rm )}\label{AL CFP}
Suppose that $B$ is a separable stable $C^*$-algebra with real rank zero, stable rank one and $({\rm K}_0(B),{\rm K}_0^+(B))$ is weakly unperforated $($in the sense of \cite{Ell0}$)$. Then $B$ has the corona factorization property.
\end{proposition}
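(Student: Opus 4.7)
I would reduce to the standard Kucerovsky--Ng characterization of the corona factorization property for a separable stable $C^*$-algebra: $B$ has CFP if and only if every norm-full projection $P$ in the multiplier algebra $M(B)$ is Murray--von Neumann equivalent to $1_{M(B)}$. So fix such a $P$; the goal is to produce a partial isometry $v\in M(B)$ with $v^*v=1_{M(B)}$ and $vv^*=P$.

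Using $rr(B)=0$ together with stability of $B$, I would first decompose $1_{M(B)}=\sum_n e_n$ as a strictly convergent orthogonal sum of nonzero projections $e_n\in B$, and analogously write $P=\sum_n p_n$ with nonzero mutually orthogonal $p_n\in B$. The central step is to show that each $e_n$ is Murray--von Neumann subequivalent, inside $B$, to some finite sum $p_{m_1}+\cdots+p_{m_{k(n)}}$ chosen from a set of indices disjoint from those already used for $e_1,\ldots,e_{n-1}$. Norm-fullness of $P$ in $M(B)$ provides, at the level of $K_0(B)$, integers $N$ and $k$ with $[e_n]\le N([p_1]+\cdots+[p_k])$; here weak unperforation of $(K_0(B),K_0^+(B))$ lets me absorb the factor $N$ by taking $Nk$ mutually orthogonal copies of the $p_m$'s (available since $B$ is stable with $rr(B)=0$), replacing the inequality by $[e_n]\le[p_1']+\cdots+[p_{Nk}']$. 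Cancellation in $K_0(B)$ coming from $sr(B)=1$ then promotes this $K_0$-inequality to an honest subequivalence of projections in $B$.

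Assembling these local subequivalences by an alternating back-and-forth bookkeeping argument, analogous to Zhang's halving technique for multiplier algebras of stable, real rank zero $C^*$-algebras, yields the desired partial isometry $v\in M(B)$ implementing $1_{M(B)}\sim P$, and CFP follows. The main obstacle is precisely the central step: extracting from the abstract norm-fullness of $P$ a controlled $K_0$-comparison between the $e_n$'s and finite families of $p_m$'s, and then using weak unperforation together with $sr(B)=1$ to convert it into a genuine projection comparison. Once this is in place, the remainder reduces to a routine strict-topology patching argument.
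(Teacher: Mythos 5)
First, note that the paper does not actually prove this proposition: it is imported verbatim from \cite[Lemma 3.3]{ALpams}, so there is no in-text argument to compare yours against. Judged on its own terms, your outline begins correctly -- the Kucerovsky--Ng formulation of the CFP, the strict decompositions $1_{M(B)}=\sum_n e_n$ and $P=\sum_n p_n$ into projections of $B$ (using real rank zero, $\sigma$-unitality and stability), and the back-and-forth assembly at the end are all standard -- but the step you yourself flag as ``the main obstacle'' is where the entire content of the lemma lies, and the mechanism you propose for it does not work as stated.

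Concretely: norm-fullness does give a single $N$ with $1_{M(B)}\precsim P^{\oplus N}$, hence $[e_n]\le N([p_1]+\cdots+[p_{k}])$ for suitable $k=k(n)$. But ``taking $Nk$ mutually orthogonal copies of the $p_m$'s, available since $B$ is stable'' produces orthogonal copies somewhere in $B$, not underneath (a fresh tail of) $P$; only subprojections of $P$ are of any use for building the partial isometry onto $P$, so this move gains nothing -- indeed $[p_1']+\cdots+[p_{Nk}']=N([p_1]+\cdots+[p_k])$ is the same class you started with, and no weak unperforation has actually been invoked. Weak unperforation is a division statement ($ng\ge 0$ implies $g\ge 0$, together with the torsion clause of Elliott's definition, which your sketch ignores); the way it genuinely enters is: if one can find $N$ pairwise orthogonal blocks $q_1,\dots,q_N$ inside the unused tail of $P$ with $[e_n]\le N[q_i]$ for \emph{every} $i$, then summing gives $N[e_n]\le N([q_1]+\cdots+[q_N])$, weak unperforation divides by $N$, and cancellation from $sr(B)=1$ upgrades $[e_n]\le[q_1+\cdots+q_N]$ to an honest subequivalence under $P$. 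The unresolved difficulty -- and the reason the CFP is not automatic for non-simple algebras -- is producing those $N$ blocks with a multiplicity bound uniform over the blocks: fullness of each tail $\sum_{m\ge j}p_m$ only yields some multiplicity $N_j$, with no a priori control as $j\to\infty$. Your proposal does not address this, so as it stands it is a correct identification of where the work is rather than a proof; the actual argument must be taken from \cite[Lemma 3.3]{ALpams} (or reached via \cite{KN2} and the Cuntz-semigroup reformulations of the CFP).
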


The corona factorization property implies the following result; see \cite[Theorem 1.4]{KN2} or  Ng's survey paper \cite{Ng}.

\begin{proposition}{\rm (}\cite[Theorem 3.4]{ALpams}{\rm )}\label{AL full}
Suppose that $B$ is a separable stable $C^*$-algebra with real rank zero, stable rank one and $({\rm K}_0(B),{\rm K}_0^+(B))$  is weakly unperforated. Let $A$ be a unital separable $C^*$-algebra. Suppose that  either $A$ or $B$ is nuclear.  Then

{\rm (1)} every unital full extension of $A$ by $B$ is absorbing.

{\rm (2)} if $A$ is simple, then every unital extension of $A$ by $B$ is absorbing.

\end{proposition}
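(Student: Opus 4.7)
The plan is to assemble the previous proposition with the standard Kucerovsky--Ng characterization of absorbing extensions in the presence of the corona factorization property (CFP). The statement is essentially a packaging of existing absorption machinery once the CFP is secured, so there is little genuinely new work to carry out.

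First, I would invoke Proposition \ref{AL CFP} directly: the hypotheses placed on $B$ (separable, stable, real rank zero, stable rank one, and weakly unperforated $\mathrm{K}_0$) are exactly those of that proposition, so $B$ possesses the corona factorization property. This is the only place where the structural assumptions on $B$ enter.

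Next, for part (1), I would appeal to \cite[Theorem 1.4]{KN2} (as in the Ng survey paper \cite{Ng}), which asserts that whenever $B$ is a $\sigma$-unital stable $C^*$-algebra with the CFP, $A$ is a unital separable $C^*$-algebra, and at least one of $A, B$ is nuclear, every unital full extension of $A$ by $B$ is absorbing. All of these hypotheses are satisfied (stability gives $\sigma$-unitality through the approximate unit of projections), so the conclusion is immediate.

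For part (2), the task is to upgrade an arbitrary unital extension to a full one using simplicity of $A$. Let $\tau\colon A\to M(B)/B$ be the unital Busby invariant of such an extension. Given any nonzero $a\in A$, let $J$ denote the closed two-sided ideal of $M(B)/B$ generated by $\tau(a)$. Then $\tau^{-1}(J)$ is a closed two-sided ideal of $A$ containing $a$, so by simplicity $\tau^{-1}(J)=A$; in particular $1_{M(B)/B}=\tau(1_A)\in J$, whence $J=M(B)/B$. Thus $\tau$ is full, and (1) gives the conclusion. The main obstacle, such as it is, lies entirely upstream in the corona factorization property and the Kucerovsky--Ng absorption theorem; the present proposition itself is a clean two-line application of these tools.
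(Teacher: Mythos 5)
Your proposal is correct and follows exactly the route the paper indicates: the result is quoted from \cite[Theorem 3.4]{ALpams}, with the derivation being precisely Proposition \ref{AL CFP} (to secure the corona factorization property) combined with the Kucerovsky--Ng absorption theorem \cite[Theorem 1.4]{KN2}, and your reduction of (2) to (1) via the standard observation that a unital homomorphism from a simple unital algebra into $M(B)/B\neq 0$ is automatically full is the expected one. No gaps.
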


\section{The ASH construction}
 In this section, we will show that any real rank zero inductive limit of nice  1-NCCW complexes 
 is K-pure. For more general case, there exist non-K-pure algebras which are the real rank zero inductive limits.
\begin{definition}\label{meiyuchou}\rm
Let $A$ be an 1-NCCW complex. Let us say $A$ is $nice$ if for any compact ideal (ideal generated by projections) $I$ of $A$, the extension $0\rightarrow I\to A\to A/I\to 0$  is K-pure. If there exists a compact ideal $I$ of $A$ such that the extension $0\rightarrow I\to A\to A/I\to 0$ has trivial boundary maps but is not K-pure, we say $A$ is $odd$.
\end{definition}
 It is obviously that any direct sums of nice 1-NCCW complexes is still nice. Then we have the following proof similar with  \cite[Proposition 4.4]{DE}.
\begin{theorem}\label{purelimit}
 Every real rank zero inductive limit of direct sums of nice 1-NCCW complexes is K-pure.
\end{theorem}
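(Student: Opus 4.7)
The plan is to adapt the strategy of \cite[Proposition 4.4]{DE}, reducing K-pureness of the limit extension to K-pureness at each finite stage, restricted to compact subideals where the niceness hypothesis applies.

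Fix an inductive system $A = \varinjlim A_n$ with each $A_n$ a finite direct sum of nice 1-NCCW complexes, and assume $A$ has real rank zero. Let $J$ be an ideal of $A$ and put $J_n := \phi_n^{-1}(J)$, where $\phi_n: A_n \to A$ is the canonical map. First I would check that $J = \overline{\bigcup_n \phi_n(J_n)}$ and $A/J = \varinjlim A_n/J_n$: because $A$ is real rank zero, $J$ is generated by its projections, and every such projection is, up to small perturbation, of the form $\phi_n(p_n)$ with $p_n$ a projection in $A_n$, which then necessarily lies in $J_n$. Continuity of K-theory then identifies $\mathrm{K}_j(J) = \varinjlim \mathrm{K}_j(J_n)$ and $\mathrm{K}_j(A/J) = \varinjlim \mathrm{K}_j(A_n/J_n)$, and the six-term sequence of $0 \to J \to A \to A/J \to 0$ is the inductive limit of the analogous sequences at finite stages. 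The boundary maps vanish because $A$ has real rank zero (so $\partial_0 = 0$ by \cite[Proposition 4]{LR}) and because 1-NCCW complexes have stable rank one, a property inherited by $A$, yielding $\partial_1 = 0$ as well.

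To verify purity, take $x \in \mathrm{K}_j(J)$ and $y \in \mathrm{K}_j(A)$ with $\iota_*(x) = m \cdot y$. After lifting and passing to sufficiently large $n$, we obtain $x_n \in \mathrm{K}_j(J_n)$ and $y_n \in \mathrm{K}_j(A_n)$ with $\iota_*(x_n) = m \cdot y_n$ holding already in $\mathrm{K}_j(A_n)$. The key point is that, again using real rank zero of $A$, the class $x$ is represented by a formal difference of projections in matrices over $J$, and after further perturbation these projections can be taken to come from projections in $J_n$ itself; this places $x_n$ in the image of $\mathrm{K}_j(\tilde J_n) \to \mathrm{K}_j(J_n)$, where $\tilde J_n \triangleleft A_n$ is the compact ideal generated by the projections of $J_n$. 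Niceness of each summand---and hence of their direct sum $A_n$---ensures that the extension $0 \to \tilde J_n \to A_n \to A_n/\tilde J_n \to 0$ is K-pure. Applying purity of $\mathrm{K}_j(\tilde J_n) \hookrightarrow \mathrm{K}_j(A_n)$ to the lifted $\tilde x_n$ produces $\tilde x'_n \in \mathrm{K}_j(\tilde J_n)$ with $\tilde x_n = m \cdot \tilde x'_n$, and $x' := (\phi_n)_*(\tilde x'_n)$ satisfies $x = m \cdot x'$ in $\mathrm{K}_j(J)$.

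The main technical obstacle is the reduction to a compact subideal in the second step: one must ensure that the K-theory class $x_n \in \mathrm{K}_j(J_n)$ actually descends from $\mathrm{K}_j(\tilde J_n)$, not merely that its image in $\mathrm{K}_j(J)$ does. This requires combining approximate lifting of projections from $J$ to $J_n$ with standard arguments about $\mathrm{K}_0$ computed through the unitization, and it is where the real-rank-zero hypothesis on $A$ is indispensable: without it, general ideals in the 1-NCCW building blocks need not be K-pure (for instance, $C_0((0,1)) \subset C([0,1])$ has nonzero index map), even when the building blocks themselves are nice.
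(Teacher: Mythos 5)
Your overall strategy is the same as the paper's (reduce to finite stages, invoke niceness on compact ideals, pass to the limit), but there is a genuine gap at exactly the step you flag as ``the main technical obstacle,'' and your sketched resolution does not close it. You work with the preimage ideals $J_n=\phi_n^{-1}(J)$, which need not be compact, and you then have to show that the lifted class $x_n\in \mathrm{K}_j(J_n)$ already comes from $\mathrm{K}_j(\tilde J_n)$ for the compact subideal $\tilde J_n$ generated by the projections of $J_n$ --- since niceness only applies to compact ideals. Your argument for this (``$x$ is represented by a formal difference of projections in matrices over $J$'') is a $\mathrm{K}_0$ statement; it says nothing about $j=1$, and the $\mathrm{K}_1$ case is precisely the one that matters in the paper's counterexamples (the failure of purity in Theorem \ref{ex1} occurs in $\mathrm{K}_1$). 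Also, descending an individual class from $\mathrm{K}_j(J_n)$ to $\mathrm{K}_j(\tilde J_n)$ at a \emph{fixed} $n$ is not what you actually need or can expect; the correct statement is only asymptotic in $n$.

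The paper avoids the descent problem entirely by reorganizing the approximation: since $A$ has real rank zero, the ideal $I$ has an increasing approximate unit of projections $(f_n)$, which lift to projections $e_n\in A_n$ with $v_n(e_n)\leq e_{n+1}$; one then takes $I_n$ to be the (automatically compact) ideal of $A_n$ generated by $e_n$, so that $I=\varinjlim I_n$ and the extension $0\to I\to A\to A/I\to 0$ is the inductive limit of the extensions $0\to I_n\to A_n\to A_n/I_n\to 0$, each of which is K-pure by niceness. Continuity of K-theory plus the fact that a direct limit of pure exact sequences of abelian groups is pure exact then finishes the proof, with no element-chasing and with $\mathrm{K}_0$ and $\mathrm{K}_1$ treated uniformly. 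Your proof becomes correct if you replace the system $(J_n)$ by such a system of compact ideals from the outset (equivalently, observe that $\varinjlim \tilde J_n=J$ because the $\tilde J_n$ absorb an approximate unit of projections of $J$, and then apply continuity of $\mathrm{K}_*$ to that system rather than trying to descend classes within a fixed finite stage).
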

\begin{proof}
 Write $A=\lim\limits_{\longrightarrow}\left(A_n, v_n\right)$ and let $v_{\infty, n}: A_n \rightarrow A$ denote the canonical map. Since $A$ has real rank zero, then $I$ and $A/I$ have real rank zero. Moreover, there is an increasing sequence of projections $\left(f_n\right)$ in $I$, forming an approximate unit of $I$. By induction, there exists a sequence of projections $e_n \in A_n$ such that $v_{\infty, n}\left(e_n\right)=f_n$ and $v_n\left(e_n\right) \leqslant e_{n+1}$. 

 Let $I_n$ denote the closed two-sided ideal of $A_n$ generated by $e_n$. Then $v_n\left(I_n\right) \subset I_{n+1}$ 
 and $I=\lim\limits_{\longrightarrow}\left(I_n, v_n\right)$.
 Now the given extension of $C^*$-algebras is the inductive limit of the sequence of extensions
$$
0 \rightarrow I_n \rightarrow A_n \rightarrow A_n / I_n \rightarrow 0,
$$
the bonding morphisms being induced by $\left(v_n\right)$.

Since $A_n$ is a nice 1-NCCW complex, then we have the sequence of $K_*$-groups
$$
0 \rightarrow {\rm K}_*(I_n) \rightarrow {\rm K}_*(A_n) \rightarrow {\rm K}_*(A_n / I_n) \rightarrow 0
$$
is pure exact. By the continuity of K-theory, the
short sequence of groups
$$
0 \rightarrow {\rm K}_*(I) \rightarrow {\rm K}_*(A) \rightarrow {\rm K}_*(A / I) \rightarrow 0
$$
is pure exact. Then $A$ is K-pure.
\end{proof}
\begin{remark}
  Let $A=A(F_1,F_2, \varphi_1,\varphi_2)$ be a 1-NCCW complex. If $F_2$ is simple, for any compact ideal $I$ of $A$, $A/I$ is either $A$ or a finite dimensional $C^*$-algebra. That is, the extension $0\rightarrow I\to A\to A/I\to 0$ is K-pure, hence, $A$ is nice. Then by Theorem \ref{purelimit}, one can obtain that the algebras classified in \cite{ALZ} by total K-theory are K-pure.
\end{remark}
\begin{theorem}\label{ex1}
There exists a unital, separable, nuclear, $C^*$-algebra $E$ of real rank zero and stable rank one, which is not K-pure but satisfies the following:

{\rm (i)} $E$ is an inductive limit of 1-NCCW complexes;

{\rm (ii)} $E$ forms a  unital full non-K-pure extension of simple AD algebras;

{\rm (iii)} $E$ is not an A$\mathcal{HD}$ algebra;

{\rm (iv)} ${\rm K}_*(E)$ is not weakly unperforated.
\end{theorem}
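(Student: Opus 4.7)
The plan is to realise $E$ as an inductive limit $\varinjlim(E_n,\psi_n)$ of direct sums of 1-NCCW blocks in which odd pieces (Definition \ref{meiyuchou}) persist in a controlled way. First I would exhibit an explicit odd 1-NCCW complex $B_0 = A(F_1,F_2,\varphi_0,\varphi_1)$ carrying a compact ideal $I_0$ for which the ${\rm K}_0$-sequence
$$
0 \to {\rm K}_0(I_0) \to {\rm K}_0(B_0) \to {\rm K}_0(B_0/I_0) \to 0
$$
has zero boundary maps but fails purity, for example of the shape $\mathbb{Z} \xrightarrow{\times n} \mathbb{Z} \to \mathbb{Z}/n\mathbb{Z} \to 0$. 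By Proposition \ref{k nccw} this reduces to choosing $\alpha,\beta$ so that a distinguished projection in $F_1$ generates an ideal whose induced ${\rm K}_0$ sits in $\ker(\alpha-\beta)$ as a subgroup of index $n$, while both boundary maps of the extension vanish.

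Next I would telescope copies of $B_0$ into an inductive system $(E_n,\psi_n)$ of direct sums of 1-NCCW blocks, with unital connecting maps engineered to meet three goals at once. The maps should subdivide the interval coordinate and grow the matrix sizes enough that $E$ admits an approximate unit of projections, yielding real rank zero. They should send each $I_n$ fully into $I_{n+1}$ so that $B := \varinjlim I_n$ is a stable simple AD algebra, and induce maps of UHF-type refinement on the quotients so that $A := \varinjlim E_n/I_n$ is a unital simple AD algebra. Finally, the induced maps on ${\rm K}_0$ should carry the non-pure $\times n$-inclusion into a direct summand of the corresponding inclusion at stage $n+1$, so that the torsion obstruction persists into the limit.

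With $E = \varinjlim E_n$ in hand, the verifications run as follows. Item (i) is by construction. For (ii), continuity of K-theory and the Busby framework yield the extension $0 \to B \to E \to A \to 0$ with $B,A$ simple AD; fullness is immediate from the full embeddings $I_n \hookrightarrow I_{n+1}$, and non-K-pureness of the limit extension follows because a sequence of non-pure inclusions that are carried into direct summands has non-pure direct limit. For (iii), every A$\mathcal{HD}$ algebra of real rank zero is K-pure: its building blocks $M_n(\widetilde{\mathbb{I}}_p)$ and $PM_n(C(X))P$ have pure compact-ideal extensions, which pass to the inductive limit by the argument of Theorem \ref{purelimit}; hence $E$ non-K-pure forces $E \notin \mathrm{A}\mathcal{HD}$. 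For (iv), the class $x \in {\rm K}_0(E)$ represented by the generator of the middle $\mathbb{Z}$ in the non-pure extension satisfies $nx \in {\rm K}_0^+(E)$ (being positive in ${\rm K}_0(B)$) while $x \notin {\rm K}_0^+(E)$, witnessing failure of weak unperforation.

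The main obstacle is the second step: simultaneously achieving real rank zero, simplicity of both $A$ and $B$, and preservation of the non-pure ${\rm K}_0$-extension. Standard routes to real rank zero via fine interval subdivision tend to split off the non-pure extension as a direct summand of a split sequence, destroying the obstruction; so the connecting maps must be chosen so that the $\mathbb{Z}/n$-torsion in ${\rm K}_0(E_n/I_n)$ is transported faithfully across every stage without being absorbed into a splitting.
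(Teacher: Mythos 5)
Your overall architecture (odd blocks, telescoping to force real rank zero, simple AD ideal and quotient, quoting K-purity of A$\mathcal{HD}$ algebras for (iii)) matches the paper's, but there is a decisive arithmetic error in where you place the obstruction. By Proposition \ref{k nccw}, ${\rm K}_0$ of a 1-NCCW complex is ${\rm Ker}(\alpha-\beta)\subseteq\mathbb{Z}^p$, hence free abelian, and its positive cone is the restriction of the simplicial cone $\mathbb{Z}_+^p$, hence unperforated; both properties persist in quotients by compact ideals (again 1-NCCW complexes or finite-dimensional algebras) and in inductive limits. Consequently: (a) the ${\rm K}_0$-sequence of a compact-ideal extension of a 1-NCCW complex with trivial boundary maps has torsion-free quotient, and a group extension with torsion-free quotient is automatically pure (if $ng=k\in K$ then $n\bar g=0$ in $G/K$, so $g\in K$ and $k\in nK$); your proposed odd block with a non-pure ${\rm K}_0$-sequence of the shape $\mathbb{Z}\xrightarrow{\times n}\mathbb{Z}\to\mathbb{Z}/n\mathbb{Z}\to 0$ therefore does not exist. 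The same conclusion follows already from your requirement that the quotient be a simple AD algebra, since AD algebras have torsion-free ${\rm K}_0$. (b) Your witness for (iv) --- $nx\in{\rm K}_0^+(E)$ while $x\notin{\rm K}_0^+(E)$ --- cannot occur: ${\rm K}_0$ of any inductive limit of 1-NCCW complexes is unperforated, and the paper explicitly verifies that ${\rm K}_0(E)$ is unperforated for its example.

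The repair is to put the torsion in ${\rm K}_1$, which is what the paper does: its block $C_n$ carries a compact ideal $I_n$ with ${\rm K}_1(I_n)=\mathbb{Z}$ sitting as $2\mathbb{Z}$ inside ${\rm K}_1(C_n)=\mathbb{Z}$, with quotient the dimension drop algebra $M_{3^n}(\widetilde{\mathbb{I}}_2)$ whose ${\rm K}_1$ is $\mathbb{Z}_2$, so the non-pure sequence $0\to\mathbb{Z}\xrightarrow{\times 2}\mathbb{Z}\to\mathbb{Z}_2\to 0$ lives on ${\rm K}_1$ and survives to the limit because the connecting maps act as the identity on ${\rm K}_1$. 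Correspondingly, the failure of weak unperforation in (iv) is a statement about the graded ordered group ${\rm K}_*(E)={\rm K}_0(E)\oplus{\rm K}_1(E)$ --- the paper invokes \cite[Theorem 5.3]{ALcounter} to convert non-K-purity into non-weak-unperforation of ${\rm K}_*$, exhibiting a positive element involving the ${\rm K}_1$ coordinate --- not about ${\rm K}_0(E)$ alone. Your items (i) and (iii), and the difficulty you correctly flag about preserving the obstruction under the real-rank-zero-forcing connecting maps, are otherwise on target; but as written the construction cannot get off the ground because the obstruction is housed in the wrong K-group. (Separately, your claim that a limit of non-pure inclusions ``carried into direct summands'' is non-pure needs the explicit identification of the ${\rm K}_1$ connecting maps; an inductive limit of non-pure extensions can perfectly well become pure.)
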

\begin{proof}
Let
$$
F_{1,n}=M_{3^n}\oplus M_{3^n} \oplus M_{3^n},\,\,\,\,F_{2,n}=M_{2\cdot 3^n}\oplus M_{2\cdot 3^n}.
$$
 Define  $\varphi_{0,n},\,\varphi_{1,n}:\,F_{1,n}\to F_{2,n}$ as
$$
\varphi_{0,n} (a)=
\left(\begin{array}{cc}
        a(\theta_1) &  \\
         & a(\theta_1)
      \end{array}
\right)\oplus \left(\begin{array}{cc}
        a(\theta_1) &  \\
         & a(\theta_3)
      \end{array}
\right),$$
$$
\varphi_{1,n} (a)=
\left(\begin{array}{cc}
        a(\theta_2) &  \\
         & a(\theta_2)
      \end{array}
\right)\oplus \left(\begin{array}{cc}
        a(\theta_2) &  \\
         & a(\theta_3)
      \end{array}
\right).
$$
Set
$$
C_n=C_n(F_{1,n},F_{2,n},\varphi_{0,n},\varphi_{1,n}).
$$
Then for any $n$,  $C_n\cong M_{3^n}(C_0)$ and
$$
{\rm Prim}(C_n)=\{\theta_{1}, \theta_{2},\theta_{3}\}\cup\coprod_{i=1}^{2}(0,1)_i.$$ By Proposition \ref{k nccw}, we have
$$
{\rm K}_0(C_n)=\mathbb{Z}\oplus\mathbb{Z},\quad {\rm K}_1(C_n)=\mathbb{Z}^2/{\small{\rm Im} \left(\begin{array}{ccc}
                                                     2 & -2 &0\\
                                                     1 & -1&0
                                                   \end{array}\right)} \cong \mathbb{Z},
$$
where the generator of ${\rm K}_1(C_n)$ is 
$\{\scriptsize  \left(\begin{array}{c}
       1 \\
       1
     \end{array}\right)+k\left(\begin{array}{c}
       2 \\
       1
     \end{array}\right)\,|\, k\in \mathbb{Z}\}$.


Let $\{t_n\}$ be a sequence dense in $(0,1)$. Define $\psi_{n,n+1}: C_n\rightarrow C_{n+1}$ by
\begin{align*}
\psi_{n,n+1}(f,a)= & u_n\left(\begin{array}{ccc}
                 f(t,1) &  &   \\
                 & f(t_n,1) &  \\
                  & & f(t_n,1)  \\
               \end{array}
\right)u_n^* \\
   & \oplus v_n\left(\begin{array}{ccc}
                 f(t,2) &  &   \\
                 & f(t_n, 1) &  \\
                  & & f(t_n,2)  \\
      \end{array}
\right)v_n^*,
\end{align*}
where $u_n,v_n$ are unitaries in $M_{2\cdot 3^{n+1}}(\mathbb{C})$ such that

\begin{align*}
\pi_e\circ \psi_{n,n+1}(f,a)= & \left(\begin{array}{cc}
         a(\theta_{1}) &  \\
         & f(t_n,1)
      \end{array}
\right)\oplus \left(\begin{array}{cc}
         a(\theta_{2}) &  \\
         & f(t_n,1)
      \end{array}
\right)\\
   & \oplus \left(\begin{array}{cc}
         a(\theta_{3}) &  \\
         & f(t_n,2)
      \end{array}\right).
\end{align*}

Let  $E=\lim\limits_{\longrightarrow}(C_n,\phi_{n,n+1})$, then $E$ is a real rank zero inductive limit of 1-NCCW complexes.
Since every 1-NCCW complex has stable rank one,  then $E$ has stable rank one.

For any $n\in\mathbb{N}$, let
$$
p_n=\left(\begin{array}{cc}
        0_{3^n} &  \\
         & 0_{3^n}
      \end{array}
\right)\oplus \left(\begin{array}{cc}
        0_{3^n}&  \\
         & 1_{3^n}
      \end{array}
\right),
$$
then $p_n$ is a projection in $C_n$. Let  $I_n$ denote the ideal of $C_n$ generated by $p_n$, then
$$
I_n=\{f\in M_{2\cdot 3^n}(C[0,1])\mid f(0)=f(1)\in 0_{3^n}\oplus M_{3^n}\},
$$
and ${\rm K}_0(I_n)={\rm K}_1(I_n)=\mathbb{Z}$.

Consider the natrual embedding map  $I_n\rightarrow C_n$, the generator of ${\rm K}_1(I_n)$ corresponds to $\{\scriptsize \left(\begin{array}{c}
       0 \\
       1
     \end{array}\right)+k\left(\begin{array}{c}
       2 \\
       1
     \end{array}\right)\mid k\in \mathbb{Z}\}=\{\scriptsize \left(\begin{array}{c}
       2 \\
       2
     \end{array}\right)+k\left(\begin{array}{c}
       2 \\
       1
     \end{array}\right)\mid k\in \mathbb{Z}\}$, which is just the double of the generator of ${\rm K}_1(C_n)$! That is,
     the short exact sequence
     $$
     0\to {\rm K}_1(I_n)\to  {\rm K}_1(C_n)\to {\rm K}_1(M_{3^n}(\widetilde{\mathbb{I}}_2))\to 0    $$
     is exactly
     $$
0\to \mathbb{Z}\xrightarrow{\times 2} \mathbb{Z}\to \mathbb{Z}_2\to 0.
$$
Hence, $
     0\to I_n\to  C_n\to M_{3^n}(\widetilde{\mathbb{I}}_2)\to 0    $
isn't a K-pure extension and $C_n$ is an odd 1-NCCW complex (Definition \ref{meiyuchou}).

Note that $E$ has an ideal $I$  which can be written as the following:
$$
I_0\rightarrow I_1\rightarrow I_2\rightarrow\cdots\to I.
$$
The quotient algebra $E/I=\lim\limits_{\longrightarrow}(M_{3^n}(\widetilde{\mathbb{I}}_2),\phi^*_{n,n+1})$ is an inductive limit of dimension drop algebras,
where $\phi^*_{n,n+1}$ is induced by $\phi_{n,n+1}$. It is easily seen that both $I$ and $E/I$ are simple.

Note that the ${\rm K}_0$ of the connecting map $\psi_{n,n+1}: C_n\rightarrow C_{n+1}$ is ${\scriptsize \left(\begin{array}{cc}
   3 & 0 \\
   1 & 2
 \end{array}\right)}$, the ${\rm K}_1$ of the connecting map $\psi_{n,n+1}:C_n\rightarrow C_{n+1}$ is the identity map, then
$$
{\,\,{\rm K}_0(C_0)} \to {\,\,{\rm K}_0(C_1)} \to
 {\,\,{\rm K}_0(C_2)}  \to
 {\,\,\cdots}\to
 {\,\,\,{\rm K}_0(E)}.
$$
$$
{\scriptsize {\,\,\left(\begin{array}{c}
       1 \\
       0
     \end{array}\right)\,\,} \mapsto {\,\,\left(\begin{array}{c}
       3 \\
       1
     \end{array}\right)\,\,} \mapsto {\,\,\left(\begin{array}{c}
       9 \\
       5
     \end{array}\right) \,\,} \mapsto {\,\, \cdots\,\,}  \mapsto {\,\,\left(\begin{array}{c}
       1 \\
       0
     \end{array}\right)\,\,} }
$$
$$
{\scriptsize{\,\,\left(\begin{array}{c}
       0 \\
       1
     \end{array}\right)\,\,} \mapsto {\,\,\left(\begin{array}{c}
       0 \\
       2
     \end{array}\right)\,\,} \mapsto {\,\,\left(\begin{array}{c}
       0 \\
       4
     \end{array}\right) \,\,} \mapsto {\,\, \cdots\,\,}\mapsto {\,\,\left(\begin{array}{c}
       0 \\
       1
     \end{array}\right)\,\,}}
$$

Now we have
$$
{\rm K}_0(I)=\mathbb{Z}\left[\frac{1}{2}\right], \,\,{\rm K}_0(E)=\mathbb{Z}\left[\frac{1}{3}\right]\oplus \mathbb{Z}\left[\frac{1}{2}\right],\,\,{\rm K}_0(E/I)=\mathbb{Z}\left[\frac{1}{3}\right],
$$
$$
{\rm K}_1(I)=\mathbb{Z},\,\,{\rm K}_1(E)=\mathbb{Z},\,\,{\rm K}_1(E/I)=\mathbb{Z}_2.
$$

Notice that we don't have $$\left(\begin{array}{c}
       1 \\
       0
     \end{array}\right)\geq \left(\begin{array}{c}
       0 \\
       1
     \end{array}\right)\quad{\rm in}\quad {\rm K}_0(C_0),$$
      and we  also don't have
      $$
      \left(\begin{array}{c}
       3 \\
       1
     \end{array}\right)\geq \left(\begin{array}{c}
       0 \\
       2
     \end{array}\right)\,\,\quad{\rm in}\quad {\rm K}_0(C_1), $$
but we have
$$
\left(\begin{array}{c}
       9 \\
       5
     \end{array}\right)\geq \left(\begin{array}{c}
       0 \\
       4
     \end{array}\right)\quad {\rm in}\quad {\rm K}_0(C_2),$$
     this implies
     $$\left(\begin{array}{c}
       1 \\
       0
     \end{array}\right)\geq \left(\begin{array}{c}
       0 \\
       1
     \end{array}\right)\quad {\rm in}\quad {\rm K}_0(E)=\mathbb{Z}\left[\frac{1}{3}\right]\oplus \mathbb{Z}\left[\frac{1}{2}\right].
     $$
     Similarly,
     for any $x\in \mathbb{Z}\left[\frac{1}{2}\right]\cap[0,\infty)$ and $0\neq y\in \mathbb{Z}\left[\frac{1}{3}\right]\cap(0,\infty)$, we have
     $$\left(\begin{array}{c}
       y \\
       0
     \end{array}\right)\geq\left(\begin{array}{c}
       0 \\
       x
     \end{array}\right)\quad {\rm in}\quad {\rm K}_0(E).$$
So we have
$$
{\rm K}_0^+(E)=\{\left(\begin{array}{c}
       x \\
       y
     \end{array}\right)\in \mathbb{Z}\left[\frac{1}{3}\right]\oplus \mathbb{Z}\left[\frac{1}{2}\right]\mid x>0\,\,{\rm or} \,(x=0, y\geq 0)  \}.
$$
Then ${\rm K}_0(E)$ is unperforated.

Now we have the following  exact sequence is a full unital extension of simple AD algebras ($I$ is actually an AT algebra; see 
Proposition \ref{sat}) with trivial boundary maps:
$$
0\rightarrow I\rightarrow E\to E/I\rightarrow 0.
$$
The ${\rm K}_0$-group extension is pure exact, but the ${\rm K}_1$-group extension
$$
0\to \mathbb{Z}\xrightarrow{\times 2} \mathbb{Z}\to \mathbb{Z}_2\to 0
$$
is not pure exact. This means that $E$ is not K-pure.   By \cite[Theorem 5.3]{ALcounter}, ${\rm K}_*(E)$ is not weakly unperforated. More concretely,
$$
\left(\left(\begin{array}{c}
       0 \\
       1
     \end{array}\right),2\right)\in {\rm K}_0(E)\oplus {\rm K}_1(E)=(\mathbb{Z}\left[\frac{1}{3}\right]\oplus \mathbb{Z}\left[\frac{1}{2}\right])\oplus \mathbb{Z}
$$
is a positive element, but ${\scriptsize
\left(\left(\begin{array}{c}
       0 \\
       1/2
     \end{array}\right),1\right)}
$ is not positive.

Since any A$\mathcal{HD}$ algebra of real rank zero is K-pure (see \cite[Proposition 4.4]{DE}), then $E$ is not an A$\mathcal{HD}$ algebra.
\end{proof}
\begin{corollary}
  There exists a real rank zero inductive limit of 1-NCCW complexes, which has real rank zero and torsion free ${\rm K}_1$ but is not an AT algebra.
\end{corollary}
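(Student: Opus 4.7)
The plan is to observe that the algebra $E$ constructed in Theorem \ref{ex1} already witnesses this corollary, so no new construction is needed; one only has to read off the relevant properties and make one extra comparison. First I would recall that by construction $E = \lim\limits_{\longrightarrow} (C_n, \psi_{n,n+1})$ is an inductive limit of the 1-NCCW complexes $C_n$, and that the theorem establishes $E$ has real rank zero. Then I would point to the explicit computation in the proof of Theorem \ref{ex1} showing ${\rm K}_1(E) = \mathbb{Z}$, which is manifestly torsion free, so the first two requirements of the corollary are immediate.

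Next I would deduce the non-AT conclusion from the non-A$\mathcal{HD}$ conclusion of Theorem \ref{ex1}. The key observation is that the class of building blocks defining A$\mathcal{HD}$ algebras, namely finite direct sums of $M_n(\widetilde{\mathbb{I}}_p)$ and $PM_n(C(X))P$ for $X \in \{pt, \mathbb{T}, [0,1], T_{II,k}\}$, includes the circle algebras $M_n(C(\mathbb{T}))$ as the special case $X = \mathbb{T}$, $P = 1$. Consequently, every AT algebra is an A$\mathcal{HD}$ algebra. Since $E$ is not an A$\mathcal{HD}$ algebra by Theorem \ref{ex1}(iii), it cannot be AT.

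There is no real obstacle here: the work has already been done in Theorem \ref{ex1}, and the corollary is simply a packaging of part of its content together with the trivial inclusion $\{\text{AT algebras}\} \subseteq \{\text{A}\mathcal{HD}\text{ algebras}\}$. The only minor point worth emphasizing in the write-up is that torsion-freeness of ${\rm K}_1(E)$ might make the reader expect an AT structure (as alluded to in the introduction's discussion of the ELP result), so it is natural to isolate this consequence as a standalone statement.
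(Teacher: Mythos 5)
Your proposal is correct and matches the paper's intended (unwritten) argument exactly: the algebra $E$ of Theorem \ref{ex1} is the witness, its ${\rm K}_1$ is $\mathbb{Z}$, and non-AT follows from non-A$\mathcal{HD}$ via the inclusion of circle algebras among the A$\mathcal{HD}$ building blocks (the case $X=\mathbb{T}$). Nothing further is needed.
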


\begin{proposition}\label{sat}
  The ideal $I$ constructed in \ref{ex1} is a stable AT algebra of real rank zero.
\end{proposition}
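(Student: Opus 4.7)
The plan is to exhibit each building block $I_n$ as Morita equivalent to a circle algebra, to establish stability of $I$ via the scale of its $K_0$-group, and then to conclude by identifying $I$ as an inductive limit of stable circle algebras.

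First I would note that $I_n$ is itself a 1-NCCW complex, namely
$$I_n \;\cong\; A(M_{3^n},\, M_{2\cdot 3^n},\, \varphi,\, \varphi),$$
where $\varphi:M_{3^n}\hookrightarrow M_{2\cdot 3^n}$ is the embedding as the lower right corner (so the two boundary maps coincide). The constant element $\mathbf{e}_n = 0_{3^n}\oplus 1_{3^n}$ sits inside $I_n$ as a projection, and restriction to its corner kills the pinch condition at the endpoints, giving the circle algebra
$$\mathbf{e}_n I_n \mathbf{e}_n \;\cong\; C(\mathbb{T},\, M_{3^n}).$$
Fullness of $\mathbf{e}_n$ in $I_n$ is a routine fibrewise check (each $M_{2\cdot 3^n}$ is simple, and $\mathbf{e}_n$ coincides with the unit of $M_{3^n}$ at the endpoints), so $I_n$ is Morita equivalent to $C(\mathbb{T})\otimes M_{3^n}$ and in particular $I_n \otimes \mathcal{K} \cong C(\mathbb{T})\otimes \mathcal{K}$.

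Next I would verify that $I$ is stable. Tracing the class $[p_n]=(0,3^n)\in K_0(C_n)=\mathbb{Z}\oplus\mathbb{Z}$ through the iterated $K_0$-connecting maps ${\scriptsize\left(\begin{array}{cc} 3 & 0 \\ 1 & 2 \end{array}\right)}$ (exactly as done for $K_0(E)$ in the proof of Theorem~\ref{ex1}), one finds that the image of $[p_n]$ in $K_0(I)=\mathbb{Z}\left[\frac{1}{2}\right]$ is $3^n/2^n$, yielding an unbounded sequence of positive elements in the scale. Since $I$ is simple, separable, of real rank zero and stable rank one, with unperforated and Riesz-interpolated $K_0$, downward hereditariness of the scale combined with its unboundedness forces the scale to equal all of $K_0(I)_+$; in the stably finite simple setting this is the standard criterion for stability (via Hjelmborg--R{\o}rdam, or via Brown's theorem on hereditary subalgebras).

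Combining the two steps yields
$$I \;\cong\; I\otimes\mathcal{K} \;\cong\; \lim_{\longrightarrow}(I_n\otimes\mathcal{K}) \;\cong\; \lim_{\longrightarrow}(C(\mathbb{T})\otimes\mathcal{K}),$$
so $I$ is an inductive limit of stable circle algebras. Real rank zero is inherited from $E$ (a standard fact for ideals). After refining each $C(\mathbb{T})\otimes\mathcal{K}$ as an inductive limit of matrix circle algebras, this presents $I$ as a stable AT algebra. Elliott's classification \cite{Ell} of simple real rank zero AT algebras then identifies $I$ uniquely (up to isomorphism) by its invariant $(\mathbb{Z}\left[\frac{1}{2}\right],\,\mathbb{Z}\left[\frac{1}{2}\right]_+,\,\mathbb{Z})$, namely $I \cong C(\mathbb{T})\otimes M_{2^\infty}\otimes\mathcal{K}$.

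The main obstacle is the stability step: Morita equivalence only supplies $I\otimes\mathcal{K}$ as a stable AT algebra, so the equality $I = I\otimes\mathcal{K}$ must be established separately. Pinning down the correct criterion---that in the simple, stably finite, real rank zero, stable rank one regime an unbounded scale with downward heredity exhausts $K_0^+$ and thus forces stability---is the crux, and must be invoked with care.
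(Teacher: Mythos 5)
Your argument is correct and reaches the same two conclusions by partly different means. For the AT structure, the paper does not stabilize: it builds an explicit approximate intertwining between the subsystem $(I_{2n-1})$ and a system of \emph{unital} circle algebras $M_{3^{2n-1}}(C(\mathbb{T}))$ (embedding each circle algebra as the corner you identify, and mapping $I_{2n-1}$ diagonally into the next circle algebra), so that $I\cong\varinjlim M_{3^{2n-1}}(C(\mathbb{T}))$ falls out directly, with no appeal to Brown's theorem or to the local characterization of AT algebras. Your route --- realizing each $I_n$ as a full corner over $C(\mathbb{T},M_{3^n})$, stabilizing, and then absorbing $\mathcal{K}$ using stability of $I$ --- is softer and avoids writing down the intertwining maps, at the cost of making the AT conclusion depend on the stability step and on the (standard, weak-semiprojectivity-based) passage from ``limit of stabilized circle algebras'' to ``AT algebra''. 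The stability step itself is the same in both proofs: the class of $p_n$ grows like $(3/2)^n$ in $\mathbb{Z}[\tfrac{1}{2}]=\mathrm{K}_0(I)$, so the scale is hereditary, unbounded, hence all of $\mathrm{K}_0^+(I)$, and \cite[Proposition 3.1]{R1997} applies.

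One side claim in your last sentence is wrong, though it does not affect the proposition: $I\ncong C(\mathbb{T})\otimes M_{2^\infty}\otimes\mathcal{K}$, since that algebra has $\mathrm{K}_1=\mathbb{Z}[\tfrac{1}{2}]$ whereas $\mathrm{K}_1(I)=\mathbb{Z}$ (the connecting maps multiply $\mathrm{K}_0$ by $2$ but act as the identity on $\mathrm{K}_1$, because only one copy of $f(t)$ appears and the remaining summands are point evaluations). The Elliott invariant you wrote down, $(\mathbb{Z}[\tfrac{1}{2}],\mathbb{Z}[\tfrac{1}{2}]_+,\mathbb{Z})$, is correct; it is only the named model that must be replaced by a limit of circle algebras whose bonding maps have winding-number pattern $\bigl(\begin{smallmatrix}1&0\end{smallmatrix}\bigr)$ rather than the full-multiplicity embeddings into $M_{2^\infty}$.
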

\begin{proof}
Firstly, we point out that $I$ is an AT algebra.
This just comes from the following commutative diagram:
$$
\xymatrixcolsep{3pc}
\xymatrix{
{\,\,M_{3}(C(\mathbb{T}))\,\,} \ar[d]_-{\rho_1} \ar[r]^-{\gamma_{1,3}}
& {\,\,M_{3^3}(C(\mathbb{T}))\,\,} \ar[d]_-{\rho_2} \ar[r]^-{\gamma_{3,5}}
& {\,\,M_{3^5}(C(\mathbb{T}))\,\,} \ar[r]^-{} \ar[d]_-{\rho_3}
& {\,\,\cdots\,\,}
 \\
{\,\,I_1\,\,} \ar[r]_-{\psi_{1,3}} \ar[ru]^-{\varrho_1}
& {\,\,I_3 \,\,} \ar[r]_-{\psi_{3,5}} \ar[ru]^-{\varrho_2}
& {\,\,I_5 \,\,} \ar[r]_-{}\ar[ru]^-{\varrho_3}
& {\,\,\cdots\,\,}}
$$
where for each $n\geq 1$, $\rho_n$ is some natural embedding map,
$$
\gamma_{2n-1,2n+1}(g)=\left(\begin{array}{cccc}
         g(e^{2\pi {\rm i}t}) &  & & \\
         & g(e^{2\pi {\rm i}t_{2n-1}}) & & \\
         & & g(e^{2\pi {\rm i}t_{2n}}) & \\
         & & & g(e^{2\pi {\rm i}t_{2n-1}})  \\
      \end{array}\right)
$$
for any $g\in M_{3^{2n-1}}(C(\mathbb{T}))$,
$$
\varrho_n(f)=h,\quad h\in M_{3^{2n+1}}(C(\mathbb{T}))
$$
with
$$
h(e^{2\pi {\rm i}t})=u_n\left(\begin{array}{cccc}
         f(t) &  & & \\
         & f(t_{2n-1}) & & \\
         & & f(t_{2n}) & \\
         & & & f(t_{2n-1})  \\
      \end{array}\right)u_n^*
$$
for any $f\in I_{2n-1}$ and for some $u_n\in M_{3^{2n+1}}(C(\mathbb{T}))$.

Thus, we have $I=\lim\limits_{\longrightarrow}(M_{3^{2n-1}}(C(\mathbb{T})),\gamma_{2n-1,2n+1})$
and it is easily seen that the scale of ${\rm K}_0(I)$ is exactly ${\rm K}_0^+(I)$. By \cite[Proposition 3.1]{R1997}, $I$ is stable.
\end{proof}

\section{Obstruction in quotients}
In this section, we construct two unital, separable, nuclear, non-K-pure ASH algebras of stable rank one and  real rank zero with the same scaled  total Cuntz semigroup (hence, the same ordered scaled total K-theory), and we show that they are not isomorphic.
\begin{notion}\label{b1b2}\rm
Recall the ${\rm C}^*$-algebras 
 constructed in \cite[Theorem 3.3]{DL3}.
Choose $\{t_n\}$  as in \ref{ex1}, then $\{t_n\}$ is dense in $(0,1)$.
For any fixed $n\in \mathbb{N}$, $${\rm Prim}(M_{3^{n}}(\widetilde{\mathbb{I}}_2))=\{\theta_1,\theta_2\}\cup(0,1).$$
Define $\varphi_n: M_{3^{n+1}}(\widetilde{\mathbb{I}}_2)\rightarrow M_{3^{n+2}}(\widetilde{\mathbb{I}}_2)$ by
 $$
 \varphi_n(f,a)
 =
 u_n\left(\begin{array}{ccc}
                 f(t) &   &   \\
                 & f(t_n) &  \\
                 &  & f(t_n)
               \end{array}
\right)u_n^*.
 $$
 Set
$$
D_n= M_{3^n}\oplus\cdots\oplus M_3\oplus \mathbb{C}\oplus M_3\oplus \cdots \oplus M_{3^n}.
$$
Define the connecting maps
$$
\phi_{n,n+1}, \phi'_{n,n+1}:M_{3^{n+1}}(\widetilde{\mathbb{I}}_2)\oplus D_n   \rightarrow M_{3^{n+2}}(\widetilde{\mathbb{I}}_2)\oplus D_{n+1}
$$ as follows:
$$
\phi_{n,n+1}((f,a),(x_{-n},\cdots ,x_n))=
\varphi_n(f,a)\oplus (a(\theta_{1}),x_{-n},\cdots ,x_n,a(\theta_{1})),
$$
$$
\phi'_{n,n+1}((f,a),(x_{-n},\cdots ,x_n))=
\varphi_n(f,a)\oplus (a(\theta_{1}),x_{-n},\cdots ,x_n,a(\theta_{2})),
$$
where $(f,a)\in M_{3^{n+1}}(\widetilde{\mathbb{I}}_2)$ and  
$(x_{-n},\cdots ,x_n)\in D_n$.

Write
$$A_1=\lim\limits_{\longrightarrow}(M_{3^{n+1}}(\widetilde{\mathbb{I}}_2)\oplus D_n ,\phi_{n,n+1}),\,\,
A_2=\lim\limits_{\longrightarrow}(M_{3^{n+1}}(\widetilde{\mathbb{I}}_2)\oplus D_n ,\phi'_{n,n+1}).$$
 Then $A_1,A_2$ have real rank zero and stable rank one.

It is proved in
\cite[Theorem 3.3]{DL3}  that $A_1$, $A_2$ are not isomorphic and
$$
(\mathrm{K}_*(A_1),\mathrm{K}_*^+(A_1))
\cong(\mathrm{K}_*(A_2),\mathrm{K}_*^+(A_2)).
$$
By \cite[Theorem 9.1]{DG}, we have
$$
(\underline{\mathrm{K}}(A_1),\underline{\mathrm{K}}(A_1)_+)\ncong
(\underline{\mathrm{K}}(A_2),\underline{\mathrm{K}}(A_2)_+),
$$
where the orders for $\underline{\mathrm{K}}(A_1)$ and $\underline{\mathrm{K}}(A_2)$ are both the Dadarlat-Gong order. 
\end{notion}

\begin{notion}\rm
Denote by $\mathbf{Z}$ the subgroup of $\mathbb{Z}[\frac{1}{3}]\oplus \prod_{\mathbb{Z}}\mathbb{Z}$ consisting of all the elements of the form $(a, \prod_{\mathbb{Z}} a_m)$ with $a_m=3^{|m|} \cdot a$ for all large enough $|m|$.

For $A_1$, $A_2$, by \cite[Theorem 3.3]{DL3}, we have
$$
\mathrm{K}_0(A_i)= \mathbf{Z},\quad  \mathrm{K}_1(A_i)=\mathbb{Z}_2,
$$
and
$$
 \mathrm{K}_0^+(A_i)=\mathbf{Z}_+:=\mathbf{Z}\cap ( \mathbb{R}_+ \oplus \prod_{\mathbb{Z}}\mathbb{R}_+),
$$
where $i=1,2$ and $\mathbb{R}_+=[0,+\infty)$.
\end{notion}

\begin{example}\label{ex2}
Now we construct two $C^*$-algebras which are real rank zero inductive limits of 1-NCCW complexes with the same total K-theory but are not isomorphic.

Recall the construction  in Theorem \ref{ex1}, we already have $\{t_n\}, C_n$, $\psi_{n,n+1}:C_n \rightarrow C_{n+1}$ and
$$
{\rm Prim}(C_n)=\{\theta_{1}, \theta_{2},\theta_{3}\}\cup\coprod_{i=1}^{2}(0,1)_i.$$
We also have $D_n$
as in \ref{b1b2}.


Define the connecting maps
$$\rho_{n,n+1}, \rho'_{n,n+1}: C_{n+1}\oplus D_n\rightarrow C_{n+2}\oplus D_{n+1}
$$
 by
$$
\rho_{n,n+1}((f,a),(x_{-n},\cdots ,x_n))
=
\psi_{n+1,n+2}(f,a)\oplus (a(\theta_{1}),x_{-n},\cdots ,x_n,a(\theta_{1})),
$$
$$
\rho'_{n,n+1}((f,a),(x_{-n},\cdots ,x_n))
=
\psi_{n+1,n+2}(f,a)\oplus (a(\theta_{1}),x_{-n},\cdots ,x_n,a(\theta_{2})),
$$
where $(f,a)\in C_{n+1}$ and $(x_{-n},\cdots ,x_n)\in D_n$.

Write
$$E_1=\lim\limits_{\longrightarrow}(C_{n+1}\oplus D_n,\rho_{n,n+1}),\,\,\,\, E_2=\lim\limits_{\longrightarrow}(C_{n+1}\oplus D_n,\rho'_{n,n+1}).$$
Then $E_1,E_2$ are unital $C^*$-algebras of real rank zero and stable rank one.

For each $C_n$, since we have the homotopy of homomorphisms as
$$
\theta_1\oplus \theta_3=0_2\sim_h 1_2=\theta_2\oplus \theta_3,
$$
we obtain
$${\rm KK}(\theta_1)={\rm KK}(\theta_2).$$
This implies
$${\rm KK}(\rho_{n,n+1})={\rm KK}(\rho'_{n,n+1}),$$ then
by the universal multi-coefficient theorem in \cite{DL2}, we have
 $$
(\underline{\mathrm{K}}(E_1),\underline{\mathrm{K}}(E_1)_+,[1_{E_1}])\cong
(\underline{\mathrm{K}}(E_2),\underline{\mathrm{K}}(E_2)_+,[1_{E_2}]).
$$

Note that the algebra $I$ in Theorem \ref{ex1} 
 sits as an ideal of $E_1$ and $E_2$, respectively. Then there are  extensions
$$
0\to I\rightarrow E_i\rightarrow{}A_i\to 0, \quad i=1,2,
$$
where $A_1,A_2$ are exactly the algebras in \ref{b1b2}.
Now $A_1\ncong A_2$ implies $E_1\ncong E_2$.
\end{example}

\begin{remark} 
Here, we give a description for Example \ref{ex2} from the point view of extension. From  Theorem \ref{ex1} and  Proposition \ref{sat}, we have
 $$\mathrm{K}_0(I)=\mathbb{Z}\left[\frac{1}{2}\right],\quad
 \mathrm{K}_0^+(I)=\mathbb{Z}\left[\frac{1}{2}\right]\cap[0,\infty)\quad {\rm and} \quad \mathrm{K}_1(I)=\mathbb{Z}.$$

From the inductive construction above, for each $i=1,2,$ the six-term exact sequence of
$$
0\to I\rightarrow E_i\rightarrow{}A_i\to 0
$$
splits into the following two short exact sequences (unital version)
$$
0\to \mathrm{K}_0(I)\xrightarrow{\zeta_i} (\mathbb{Z}\left[\frac{1}{2}\right]\oplus \mathbf{Z},(0,(1, \prod_{\mathbb{Z}} 3^{|m|})))\xrightarrow{\eta_i} (\mathrm{K}_0(A_i),(1, \prod_{\mathbb{Z}} 3^{|m|}))\to 0,
$$
$$
0\to \mathrm{K}_1(I)\xrightarrow{\times2}\mathbb{Z}\xrightarrow{\nu_i} \mathrm{K}_1(A_i)\to 0,
$$
where $\zeta_i(a)=(a, 0)$, 
and
$\eta_i(x, (b, \prod_\mathbb{Z}b_m))=(b, \prod_\mathbb{Z}b_m).$
Since the $K_1$-sequence isn't a pure group extension, we have $E_i$ is not K-pure.
Moreover,  $(\mathrm{K}_*(E_i),\mathrm{K}_*^+(E_i))$ is torsion free and is not weakly unperforated.

Here, we point out that both the extensions
$$
0\to I\xrightarrow{\iota_i} E_i\xrightarrow{\pi_i}A_i\to 0, \quad i=1,2,
$$
are not full, as they are not stenotic (see \cite[Corollary 3.20]{ERR2}). 

\end{remark}

\begin{remark}\label{Cutotal}
From Example \ref{ex2},  we have a canonical isomorphism
 $$
\lambda:\,(\underline{\mathrm{K}}(E_1),\underline{\mathrm{K}}(E_1)_+,[1_{E_1}])\to
(\underline{\mathrm{K}}(E_2),\underline{\mathrm{K}}(E_2)_+,[1_{E_2}]).
$$
At once, we obtain the following commutative diagram in $\Lambda$-category
$$
\xymatrixcolsep{2pc}
\xymatrix{
{\,\,\underline{\mathrm{K}}(I)\,\,} \ar[r]^-{\underline{\mathrm{K}}(\iota_1)}\ar[d]_-{id}
& {\,\,\underline{\mathrm{K}}(E_1)\,\,} \ar[d]_-{ \lambda}
 \\
{\,\,\underline{\mathrm{K}}(I) \,\,}\ar[r]_-{\underline{\mathrm{K}}(\iota_2)}
& {\,\,\underline{\mathrm{K}}(E_2)\,\,}.}
$$
Since $A_1,A_2$ and $I$ are K-pure, it is straightforward to check that $E_1$ and $E_2$ has the same total Cuntz semigroup (\cite{AL,ALcounter}). This means that the ``ideal version of ordered scaled total K-theory'' does not distinguish $E_1,E_2$,
and hence, the information of ``total K-theory in the quotient'' is also necessary for the classification! We are able to give a classification in terms of the total K-theory of ideals and quotients; see Theorem \ref{mainthm}.
\end{remark}
\begin{remark}
We point out that the algebras $A_1,A_2$ in \ref{b1b2} have the ideals isomorphic to $M_{3^n}$, so $A_1,A_2$ are not $\mathcal{Z}$-stable, which implies $E_1,E_2$ in Example \ref{ex2} are not $\mathcal{Z}$-stable (\cite[ 4.3]{TW}). There also exist real rank zero $\mathcal{Z}$-stable $C^*$-algebras with the same total K-theory but they are not isomorphic, one can check the algebras $E_1,E_2$ in \cite[Example 6.2]{ALbock}. We will construct such examples in the class of real rank zero limits of
1-NCCW complexes. We just need to make a slight modification in the inductive structure.
\end{remark}
Let us use $a^{\thicksim k}$ to denote 
${\rm diag}\{a,\cdots,a\}$, where $a$ repeats $k$ times.
\begin{example}\label{ex Z}
Choose $\{t_n\}, C_n, \psi_{n,n+1}:C_n \rightarrow C_{n+1}$ as in Example \ref{ex2}. Define
$$
D_n'=\underbrace{M_{3^n}\oplus M_{3^n}\oplus \cdots \oplus M_{3^n}}_{2n+1\,\,copies}.
$$
Define the connecting maps $\varrho_{n,n+1}, \varrho'_{n,n+1}: C_{n+1}\oplus D_n'\rightarrow C_{n+2}\oplus D_{n+1}'$ by
$$
\varrho_{n,n+1}((f,a),(x_{-n},\cdots,x_n))
=
\psi_{n+1,n+2}(f,a)\oplus (a(\theta_{1}),x_{-n}^{\sim3}, \cdots,x_n^{\sim3},a(\theta_{1})),
$$
$$
\varrho'_{n,n+1}((f,a),(x_{-n}, \cdots,x_n))
=
\psi_{n+1,n+2}(f,a)\oplus (a(\theta_{1}),x_{-n}^{\sim3}, \cdots ,x_n^{\sim3},a(\theta_{2})),
$$
where $(f,a)\in C_{n+1}$, $(x_{-n}, \cdots,x_n)\in D_n'$. 

Write
$$
E_1'=\lim\limits_{\longrightarrow}(C_{n+1}\oplus D_n',\varrho_{n,n+1}),\,\, E_2'=\lim\limits_{\longrightarrow}(C_{n+1}\oplus D_n',\varrho'_{n,n+1}).$$ Then $E_1',E_2'$ are unital $C^*$-algebras of real rank zero and stable rank one and
the algebra $I$ in Theorem \ref{ex1} also sits as an ideal of $E_1'$ and $E_2'$, respectively. Then we have extensions
$$
0\to I\rightarrow E_i'\rightarrow{}A_i'\to 0, \quad i=1,2.
$$
Note that 
each matrix algebra $M_{3^n}$ which sits as an ideal of $A_1$,$A_2$, respectively, is replaced by the UHF algebra $M_{3^\infty}$ for $A_1'$,$A_2'$, respectively.
Since the canonical unital embedding map from $M_{3^n}$ to $M_{3^\infty}$ induces the identity map from $\mathrm{K}_0(M_{3^n};\mathbb{Z}_2)(\cong\mathbb{Z}_2)$ to $\mathrm{K}_0(M_{3^\infty};\mathbb{Z}_2)(\cong\mathbb{Z}_2)$, the same argument
in the proof of \cite[Theorem 3.3]{DL3} works for
$$\mathrm{K}_0(A_i';\mathbb{Z}\oplus\mathbb{Z}_2)=\mathbb{Z}[\frac{1}{3}]\oplus \mathbb{Z}_2\oplus\mathbb{Z}_2\oplus
\prod_{j=-\infty}^{+\infty}\mathbb{Z}[\frac{1}{3}]\oplus \prod_{j=-\infty}^{+\infty}\mathbb{Z}_2.$$
Since there doesn't exist an order-preserving isomorphism between  $\mathrm{K}_0(A_1';\mathbb{Z}\oplus\mathbb{Z}_2)$ and $\mathrm{K}_0(A_2';\mathbb{Z}\oplus\mathbb{Z}_2)$,
so we still have $A_1'\ncong A_2' $.
Thus, $E_1'\ncong E_2'$.

It also can  be checked that $A_i'\cong A_i\otimes M_{3^\infty}$, $i=1,2$,  then $A_1'$ and $A_2'$ are $\mathcal{Z}$-stable. Since $I$ is also $\mathcal{Z}$-stable, we conclude that $E_1',E_2'$ are $\mathcal{Z}$-stable (\cite[4.3]{TW}).
With a same argument for $E_1,E_2$,  we get
 $$
(\underline{\mathrm{K}}(E_1'),\underline{\mathrm{K}}(E_1')_+,[1_{E_1'}])\cong
(\underline{\mathrm{K}}(E_2'),\underline{\mathrm{K}}(E_2')_+,[1_{E_2'}])
$$
and  $E_1,E_2$ have the same total Cuntz semigroup.
\end{example}

\section{full extension case and classification}

The algebra $E$ in Theorem \ref{ex1} forms a unital  full extension.
Although $E_1,E_2$  in Example \ref{ex2} form unital extensions but both of them are not full. Thus, it may seem that the non-fullness could induce some obstructions for the classification, but we point out that this is not the case. That is, there exist two unital full extensions with similar phenomena. We propose the following modification.

\begin{example}
Set $l_1=9$ and for any $n\geq 1$,
take
$$
l_{n+1}=2\cdot l_n+3^{n+1}+2\cdot 4^{n-1}+(3+9+27+\cdots+3^n)\cdot4^n,
$$
thus, we obtain a sequence $\{l_n\}_{n=1}^\infty$ inductively.

Set
$$
F_{1,n}'= M_{3^n}(\mathbb{C})\oplus M_{3^n}(\mathbb{C})\oplus M_{l_n}(\mathbb{C}),\,\,\,\,
F_{2,n}'= M_{2\cdot3^n}(\mathbb{C})\oplus M_{l_n+3^n}(\mathbb{C}).
$$
Define  $\varphi_{0,n}',\,\varphi_{1,n}':\,F_{1,n}'\to F_{2,n}'$ as
$$
\varphi_{0,n}' (a)=
\left(\begin{array}{cc}
        a(\theta_{1}) &  \\
         & a(\theta_{1})
      \end{array}
\right)\oplus \left(\begin{array}{cc}
        a(\theta_{1}) &  \\
         & a(\theta_{3})
      \end{array}
\right),$$
$$
\varphi_{1,n}' (a)=
\left(\begin{array}{cc}
        a(\theta_{2}) &  \\
         & a(\theta_{2})
      \end{array}
\right)\oplus \left(\begin{array}{cc}
        a(\theta_{2}) &  \\
         & a(\theta_{3})
      \end{array}
\right).
$$
Set $C_n'=C_n'(F_{1,n}',F_{2,n}',\varphi_{0,n}',\varphi_{1,n}')$.
Then $${\rm Prim}(C_n')=\{\theta_{1}, \theta_{2},\theta_{3}\}\cup\coprod_{i=1}^{2}(0,1)_{i}.$$
For each $n\geq 1$
and $i=1,2$,
let $\{t_n\}$ be a dense sequence in $(0,1)$. Still take $$
D_n= M_{3^n}\oplus\cdots\oplus M_3\oplus \mathbb{C}\oplus M_3\oplus \cdots \oplus M_{3^n}.
$$


Define $\tau_{n,n+1}^{0}: C_{n+1}'\oplus D_n\rightarrow C_{n+2}'$ by
\begin{align*}
 \tau_{n,n+1}^0((f,a),(x_{-n},\cdots,x_n))=
  & u\left(\begin{array}{ccc}
                 f(t,1) &  &   \\
                 & f(t_n, 1) &  \\
                 & &  f(t_n, 1)  \\
               \end{array}
\right)u^* \\
   & \oplus v\left(\begin{array}{cccc}
                 f(t,2) &  & &  \\
                 & f(t_n, 1) & & \\
                 &     & \langle x_n \rangle^{\sim 2\cdot 4^{n-1}} & \\
                  & & & f(t_n, 2)  \\
      \end{array}
\right)v^*,
\end{align*}
where
 $(f,a)\in C_{n+1}'$, $(x_{-n},\cdots,x_n)\in D_n$, $\langle x_n \rangle= {\rm diag}\{x_{-n},\cdots,x_n\}$,  $u\in M_{2\cdot 3^{n+1}}(\mathbb{C})$ and $v\in M_{l_{n+1}+3^{n+1}}(\mathbb{C})$ are unitaries such that
\begin{align*}
  \pi_e\circ \tau^0_{n,n+1}((f,a),(x_{-n},\cdots,x_n))= & \left(\begin{array}{cc}
         a(\theta_1) &  \\
         & f(t_n,1)
      \end{array}
\right)\oplus \left(\begin{array}{cc}
         a(\theta_2) &  \\
         & f(t_n,1)
      \end{array}
\right) \\
   &\oplus  \left(\begin{array}{ccc}
         a(\theta_3) &  &\\
         &  \langle x_n \rangle^{\sim 2\cdot 4^{n-1}} & \\
         & & f(t_n,2)
      \end{array}
\right).
\end{align*}
Define the connecting maps
$$
\tau_{n,n+1}, \tau'_{n,n+1}: {C}_{n+1}'\oplus D_n\rightarrow {C}_{n+2}'\oplus D_{n+1}$$
 by
 \begin{align*}
  \tau_{n,n+1}((f,a),(x_{-n},\cdots,x_n) )
=& \tau_{n,n+1}^0((f,a),(x_{-n},\cdots,x_n))  \\
  &\oplus (a(\theta_1),x_{-n},\cdots,x_n,a(\theta_1)),
 \end{align*}
  \begin{align*}
\tau'_{n,n+1}((f,a),(x_{-n},\cdots,x_n) )
=& \tau_{n,n+1}^0((f,a),(x_{-n},\cdots,x_n)) \\
  &\oplus (a(\theta_1),x_{-n},\cdots,x_n,a(\theta_2)),
 \end{align*}
where $(f,a)\in C_{n+1}'$ and $(x_{-n},\cdots,x_n)\in D_n$.
Write
$$
E_1'=\lim\limits_{\longrightarrow}(C_{n+1}'\oplus D_n,\tau_{n,n+1}),\,\,
E_2'=\lim\limits_{\longrightarrow}(C_{n+1}'\oplus D_n,\tau'_{n,n+1}).
$$
Then $E_1',E_2'$ are of real rank zero and stable rank one and
 we have
$$
0\to I\to E_i'\to A_i\to 0,\quad i=1,2,
$$
are unital full extensions (the fullness can be checked through \cite[Corollary 3.20]{ERR2}), 
$E_1',E_2'$ satisfying all the statement for $E_1,E_2$ in the last section.
\end{example}

From the examples we built, we have shown that ASH algebras or  extensions of A$\mathcal{HD}$ algebras can not be classified by the ideal version of total K-theory. Hence,  total K-theory of the quotients are also necessary for the classification. We will give some evidences that the total  total K-theory of ideals and quotients are sufficient for classification. We list the following:
\begin{theorem}\label{mainthm}
Given A$\mathcal{HD}$ algebras $A_i,B_i,$ $i=1,2$  of stable rank one and real rank zero with $A_i$ unital and $B_i$ stable and
 unital full extensions with trivial boundary maps:
$$
0\to B_i\to E_i\to A_i\to 0,\quad i=1,2.
$$
If the following diagram is commutative in $\Lambda$-category
$$\xymatrixcolsep{2pc}
\xymatrix{
 {\,\,\underline{\mathrm{K}}(B_1)\,\,} \ar[d]_-{\cong_+}\ar[r]^-{\underline{\mathrm{K}}(\iota_1)}
& {\,\,(\underline{\mathrm{K}}(E_1),[1_{E_1}])\,\,} \ar[d]_-{\cong_+} \ar[r]^-{\underline{\mathrm{K}}(\pi_1)}
& {\,\,(\underline{\mathrm{K}}(A_1),[1_{A_1}])\,\,}\ar[d]_-{\cong_+}\\
{\,\,\underline{\mathrm{K}}(B_2)\,\,} \ar[r]_-{\underline{\mathrm{K}}(\iota_2)}
& {\,\,(\underline{\mathrm{K}}(E_2),[1_{E_2}]) \,\,} \ar[r]_-{\underline{\mathrm{K}}(\pi_2)}
& {\,\,(\underline{\mathrm{K}}(A_2),[1_{A_2}]) \,\,},}
$$
where $\cong_+$ means the ordered scaled isomorphism, then
 $E_1\cong E_2.$
\end{theorem}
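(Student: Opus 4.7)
The plan is to identify the side algebras $B_1\cong B_2$ and $A_1\cong A_2$ via the Dadarlat--Gong classification of real rank zero A$\mathcal{HD}$ algebras, thereby reducing to comparing two unital full extensions of the same pair $(A,B)$, and then to apply Weidner's absorbing-extension machinery (Theorem~\ref{strong wei}) together with the absorption result for full extensions (Proposition~\ref{AL full}).

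First, the outer vertical arrows of the hypothesized diagram are ordered scaled $\Lambda$-isomorphisms, so by the classification of real rank zero A$\mathcal{HD}$ algebras via ordered scaled total K-theory they are implemented by $*$-isomorphisms $\phi\colon B_1\to B_2$ and $\psi\colon A_1\to A_2$. Pushing $e_1$ forward along $\phi$ and pulling back along $\psi^{-1}$ produces an equivalent extension $e_1'\colon 0\to B_2\to E_1'\to A_2\to 0$ with $E_1'\cong E_1$. Setting $B:=B_2$ and $A:=A_2$, both $e_1'$ and $e_2$ are then unital full extensions of $A$ by $B$ with trivial boundary maps, and the middle vertical arrow gives a $\Lambda$-isomorphism $\underline{\mathrm K}(E_1')\cong\underline{\mathrm K}(E_2)$ compatible with the new maps $\iota_1',\pi_1'$ and $\iota_2,\pi_2$.

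Next, $B$ is separable, stable, of real rank zero and stable rank one, with weakly unperforated $\mathrm K_0$ (being an A$\mathcal{HD}$ algebra of real rank zero), so Proposition~\ref{AL full} makes every unital full extension of $A$ by $B$ absorbing; in particular $e_1'$ and $e_2$ are absorbing. Applying Theorem~\ref{strong wei} yields the exact sequence
$$
0\to \mathrm{Ext}_{[1]}(\mathrm K_*(A),\mathrm K_*(B))\to \mathrm{Ext}^u_{ss}(A,B)\to \mathrm{Hom}_{[1]}(\mathrm K_*(A),\mathrm K_*(B))\to 0,
$$
in which the quotient map records the $\mathrm K_*$-index of the extension, i.e.\ the six-term boundary maps. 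Since $e_1'$ and $e_2$ both have trivial boundary, both classes land in $\mathrm{Ext}_{[1]}(\mathrm K_*(A),\mathrm K_*(B))$, where the class is encoded by the scaled short exact sequence $0\to \mathrm K_*(B)\to \mathrm K_*(E_i)\to \mathrm K_*(A)\to 0$ of abelian groups. Restricting the total K-theory commutative diagram to ordinary K-theory gives an equivalence of these two scaled short exact sequences, so $e_1'$ and $e_2$ determine the same element in $\mathrm{Ext}_{[1]}$, hence in $\mathrm{Ext}^u_{ss}(A,B)$.

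Consequently $e_1'\sim_{ss} e_2$, so there exist unital trivial extensions $\sigma_1,\sigma_2$ with $e_1'\oplus\sigma_1\sim_s e_2\oplus\sigma_2$. Absorption gives $e_1'\oplus\sigma_1\sim_s e_1'$ and $e_2\oplus\sigma_2\sim_s e_2$, so $e_1'\sim_s e_2$, their Busby invariants are unitarily equivalent in $M(B)/B$, and therefore $E_1'\cong E_2$; combined with $E_1\cong E_1'$ this yields $E_1\cong E_2$. The main obstacle I expect is the bookkeeping in the first step: one must verify that $\phi$ and $\psi$ can be chosen so that after pushout/pullback the middle $\Lambda$-isomorphism genuinely intertwines the new $\underline{\mathrm K}(\iota)$ and $\underline{\mathrm K}(\pi)$, and that under the trivial-boundary hypothesis no additional mod-$n$ coefficient data escapes what is already recorded by $\mathrm{Ext}_{[1]}(\mathrm K_*(A),\mathrm K_*(B))$; the latter should follow from naturality of the Bockstein operations in the six-term sequence, which forces the mod-$n$ extensions to split compatibly once the integral ones do.
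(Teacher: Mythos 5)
Your proposal is correct and follows essentially the same route as the paper: lift the outer $\Lambda$-isomorphisms to $*$-isomorphisms of $B_i$ and $A_i$ via the Dadarlat--Gong classification, reduce to two unital full extensions of a common quotient by a common ideal with identical $\mathrm K_*$-extension data, and conclude via Wei's exact sequence for $\mathrm{Ext}^u_{ss}$ together with the corona factorization/absorption results. If anything, you are slightly more explicit than the paper about why trivial boundary maps place both classes in $\mathrm{Ext}_{[1]}$ and about upgrading stable strong unitary equivalence to strong unitary equivalence via absorption.
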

\begin{proof} Denote the isomorphism maps by $\gamma_0,\gamma,\gamma_1$ in
$$\xymatrixcolsep{2pc}
\xymatrix{
 {\,\,\underline{\mathrm{K}}(B_1)\,\,} \ar[d]_-{\gamma_0}\ar[r]^-{\underline{\mathrm{K}}(\iota_1)}
& {\,\,(\underline{\mathrm{K}}(E_1),[1_{E_1}])\,\,} \ar[d]_-{\gamma} \ar[r]^-{\underline{\mathrm{K}}(\pi_1)}
& {\,\,(\underline{\mathrm{K}}(A_1),[1_{A_1}])\,\,}\ar[d]_-{\gamma_1}\\
{\,\,\underline{\mathrm{K}}(B_2)\,\,} \ar[r]_-{\underline{\mathrm{K}}(\iota_2)}
& {\,\,(\underline{\mathrm{K}}(E_2),[1_{E_2}]) \,\,} \ar[r]_-{\underline{\mathrm{K}}(\pi_2)}
& {\,\,(\underline{\mathrm{K}}(A_2),[1_{A_2}]) \,\,}.}
$$
Since for $ i=1,2,$ $B_i$, $A_i$ are A$\mathcal{HD}$ algebras of real rank zero, by \cite[Theorem 9.1]{DG}, there exist isomorphisms
$\phi_0:\,B_1\to B_2$
and
$\phi_1:\,A_1\to A_2$
 such that $ \underline{\mathrm{K}}(\phi_j)=\gamma_j$, $j=0,1$. By assumption,
we have the following diagram: 
$$\xymatrixcolsep{2pc}
\xymatrix{
 {\,\,\underline{\mathrm{K}}(B_1)\,\,} \ar[d]_-{\underline{\mathrm{K}}(\phi_0)}\ar[r]^-{\underline{\mathrm{K}}(\iota_1)}
& {\,\,(\underline{\mathrm{K}}(E_1),[1_{E_1}])\,\,} \ar[d]_-{\gamma} \ar[r]^-{\underline{\mathrm{K}}(\pi_1)}
& {\,\,(\underline{\mathrm{K}}(A_1),[1_{A_1}])\,\,}\ar[d]_-{\underline{\mathrm{K}}(\phi_1)}\\
{\,\,\underline{\mathrm{K}}(B_2)\,\,} \ar[r]_-{\underline{\mathrm{K}}(\iota_2)}
& {\,\,(\underline{\mathrm{K}}(E_2),[1_{E_2}]) \,\,} \ar[r]_-{\underline{\mathrm{K}}(\pi_2)}
& {\,\,(\underline{\mathrm{K}}(A_2),[1_{A_2}]) \,\,} .}
$$
Restricting to $\mathrm{K}_*$, the following diagram is commutative with exact rows:
$$
\xymatrixcolsep{2pc}
\xymatrix{
{\,\,0\,\,} \ar[r]^-{}
& {\,\,\mathrm{K}_*(B_1)\,\,} \ar[d]_-{\mathrm{K}_*(\phi_0)} \ar[r]^-{\mathrm{K}_*(\iota_1)}
& {\,\,(\mathrm{K}_*(E_1),[1_{E_1}])\,\,} \ar[d]_-{\gamma_*} \ar[r]^-{\mathrm{K}_*(\pi_1)}
& {\,\,(\mathrm{K}_*(A_1),[1_{A_1}])\,\,} \ar[d]_-{\mathrm{K}_*(\phi_1)} \ar[r]^-{}
& {\,\,0\,\,} \\
{\,\,0\,\,} \ar[r]^-{}
& {\,\,\mathrm{K}_*(B_2)\,\,} \ar[r]_-{  \mathrm{K}_*(\iota_2)}
& {\,\,(\mathrm{K}_*(E_2),[1_{E_2}]) \,\,} \ar[r]_-{\mathrm{K}_*(\pi_2)}
& {\,\,(\mathrm{K}_*(A_2),[1_{A_2}]) \,\,} \ar[r]_-{}
& {\,\,0\,\,},}
$$ 
where $\mathrm{K}_*(\phi_j)$ is induced by $\phi_j$, $j=0,1,$
$\gamma_*$ is the restriction of $\gamma$.

Now we have the following commutative diagram with exact rows:
$$
\xymatrixcolsep{2pc}
\xymatrix{
{\,\,0\,\,} \ar[r]^-{}
& {\,\,\mathrm{K}_*(B_1)\,\,} \ar[d]_-{{\rm id}} \ar[r]^-{\mathrm{K}_*(\iota_1)}
& {\,\,(\mathrm{K}_*(E_1),[1_{E_1}])\,\,} \ar[d]_-{\eta_*} \ar[r]^-{\mathrm{K}_*(\phi_1\circ \pi_1) }
& {\,\,(\mathrm{K}_*(A_2),[1_{A_2}])\,\,} \ar[d]_-{{\rm id}} \ar[r]^-{}
& {\,\,0\,\,\,} \\
{\,\,0\,\,} \ar[r]^-{}
& {\,\,\mathrm{K}_*(B_1)\,\,} \ar[r]_-{ \mathrm{K}_*( \iota_2\circ\phi_0)}
& {\,\,(\mathrm{K}_*(E_2),[1_{E_2}]) \,\,} \ar[r]_-{\mathrm{K}_*(\pi_2)}
& {\,\,(\mathrm{K}_*(A_2),[1_{A_2}]) \,\,} \ar[r]_-{}
& {\,\,0\,\,}.}
$$

Then by Theorem \ref{strong wei} (the naturality comes from \cite[Theorem 4.14]{GR}), Proposition \ref{AL CFP} and Proposition \ref{AL full},  the two extensions
$$
0\to B_1 \xrightarrow{\iota_1} E_1\xrightarrow{\phi_1\circ \pi_1} A_2\to 0
$$
and
$$
0\to B_1 \xrightarrow{\iota_2\circ \phi_0} E_2\xrightarrow{\pi_2} A_2\to 0
$$
are strongly unitarily equivalent,   and consequently $E_1\cong E_2$.

\end{proof}
\begin{remark}
In the above proof, we require $A_1,A_2,B_1,B_2$ are A$\mathcal{HD}$ algebras of real rank zero. As we only use the fact that these algebras can be completely classified by the total K-theory, so this assumption can be relaxed to more general class of $C^*$-algebras.
\end{remark}

\section{The example with bounded torsion}

Now we present a real rank zero inductive limit of 1-NCCW complexes which has torsion free ${\rm K}_0$ and non trivial, but bounded torsion ${\rm K}_1$ but is not an AD algebra.
This is in contrast with Theorem \ref{ex1}.
\begin{example} \label{ex torsion}
Set
$$
F_{1}=\mathbb{C}\oplus M_{2}(\mathbb{C})\oplus \mathbb{C}\oplus M_{2}(\mathbb{C}),\,\,
F_{2}=M_{4}(\mathbb{C})\oplus M_{4}(\mathbb{C}).
$$
Define  $\varphi_{0},\,\varphi_{1}:\,F_{1}\to F_{2}$ as
$$
\varphi_0 (a)=
\left(\begin{array}{cccc}
        a(\theta_{1}) & & & \\
         & a(\theta_{1}) & & \\
         & & a(\theta_{1}) & \\
         & & & a(\theta_{1})
      \end{array}
\right)\oplus \left(\begin{array}{ccc}
        a(\theta_{2}) & &  \\
         & a(\theta_{3}) &  \\
         & & a(\theta_{3})
      \end{array}
\right),$$
$$
\varphi_1 (a)=
\left(\begin{array}{cc}
        a(\theta_{2}) &  \\
         & a(\theta_{2})
      \end{array}
\right)\oplus \left(\begin{array}{cc}
        a(\theta_{4}) &  \\
         & a(\theta_{4})
      \end{array}
\right).
$$
Set $F=F(F_{1},F_{2},\varphi_{0},\varphi_{1})$.
Then
$${\rm Prim}(F)=\{\theta_{1}, \theta_{2},\theta_{3},\theta_{4}\}\cup\coprod_{i=1}^{2}(0,1)_{i}.$$

For any $n\in \mathbb{N}$, set
$
E_n=M_{5^{n-1}}(F).
$ Then
$$
{\rm K}_0(E_n)\cong {\rm Ker}\left(\begin{array}{cccc}
                          4 & -2 & 0 & 0 \\
                          0 & 1 & 2 & -2
                        \end{array}
\right)  \cong \mathbb{Z}\oplus \mathbb{Z},
$$ where $ (0,0,1,1)^{\rm T}$ and $(1,2,0,1)^{\rm T}$ in
${\scriptsize {\rm Ker}\left(\begin{array}{cccc}
                          4 & -2 & 0 & 0 \\
                          0 & 1 & 2 & -2
                        \end{array}\right)  }$ correspond to the generator $(0,1)$ and $(1,0)$ in $\mathbb{Z}\oplus \mathbb{Z}$, respectively.

Define $\phi_{n,n+1}: E_n\rightarrow E_{n+1}$ by
\begin{align*}
\phi_{n,n+1}(f,a)= & u\left(\begin{array}{cc}
                 f(t,1) &  \\
                 & f(t_n,1)^{\sim 4}   \\
               \end{array}
\right)u^* \\
   & \oplus v\left(\begin{array}{ccccc}
                 f(t,2) &  & \\
                 & f(t_n,1)^{\sim 2}  & \\
                 &     & f(t_n,2)^{\sim 2}  \\
      \end{array}
\right)v^*
\end{align*}
 where
 $(f,a)\in {E}_n$,
 $u,v$ are unitaries in $M_{4\cdot 5^n}(\mathbb{C})$ 
 such that
\begin{align*}
\pi_e\circ \phi_{n,n+1}(f,a)= & \left(\begin{array}{cc}
         a(\theta_1)  &\\
         &   f(t_n,1)  \\
      \end{array}
\right)\oplus \left(\begin{array}{cc}
         a(\theta_2) &  \\
         &   f(t_n,1)^{\sim 2}  \\
      \end{array}
\right) \\
 & \oplus \left(\begin{array}{cc}
         a(\theta_3) & \\
         &   f(t_n,2) \\
      \end{array}
\right)\oplus \left(\begin{array}{ccc}
         a(\theta_4) & & \\
         &   f(t_n,1)&  \\
          && f(t_n,2) \\
      \end{array}
\right).
\end{align*}

Then $E=\lim\limits_{\longrightarrow}(E_n,\phi_{n,n+1})$ is a unital ASH algebra of real rank zero and stable rank one with
$$
{\rm K}_0(E)=\mathbb{Z}\left[\frac{1}{3}\right]\oplus \mathbb{Z}\left[\frac{1}{5}\right]
\quad{\rm
and}\quad
{\rm K}_1(E)=\mathbb{Z}_4.
$$

Take $p_n$ to be the projection in $E_n$ with
$$
[p_n]=(0,0,1,1)^{\rm T}\in {\rm K}_0(E_n),
$$
and denote $B_n$ the ideal of $E_n$ generated by $p_n$. Then
$B=\lim\limits_{\longrightarrow}(B_n,\phi_{n,n+1})$ forms a unique proper ideal of $E$. With an almost same proof of Proposition \ref{sat}, we can obtain $B$ is a stable AD algebra of real rank zero. Denote $A= E/B=\lim\limits_{\longrightarrow}(E_n/B_n,\phi_{n,n+1}^*)$,
where $\phi_{n,n+1}^*$ is induced by $\phi_{n,n+1}$. $A$ is unital and of real rank zero.
Take
$$F'=\{f\in M_4(C[0,1])\mid f(0)=\lambda^{\sim4},~f(1)=\mu^{\sim2}~~ ,{\rm }~~\lambda\in\mathbb{C},~ \mu\in M_2\}.$$
Then $F'$ is in fact a quotient of $F$ and
$$E_n/B_n\cong M_{5^{n-1}}(F').$$
By \cite[Theorem 3.1]{ALajs}, we have $A$ is a AD algebra.

Note that
$$
{\rm K}_0(B)=\mathbb{Z}\left[\frac{1}{3}\right]
\quad{\rm
and}\quad
{\rm K}_1(B)=\mathbb{Z}_2,
$$
and
$$
{\rm K}_0(A)=\mathbb{Z}\left[\frac{1}{5}\right]
\quad{\rm
and}\quad
{\rm K}_1(A)=\mathbb{Z}_2.
$$
Since we have the ${\rm K}_1$-exact sequence
$$
0\to {\rm K}_1(B)\to {\rm K}_1(E)\to {\rm K}_1(A)\to 0
$$
as
$$
0\to \mathbb{Z}_2\to \mathbb{Z}_4\to \mathbb{Z}_2\to 0,
$$
which is not a pure group extension,
the extension
$$
0\to B\to E\to A\to 0
$$
is a non-K-pure extension, though it is unital full. 
Hence, $E$ is a non-K-pure ASH algebra, so $E$ is not an AD algebra.
\end{example}
\begin{remark}
 This is contrast with \cite[Theorem 4.4]{DL0} and one can also modify the above construction to show the $E$ in \cite[Example 4.5]{DL0} is also an ASH algebra (Dadarlat-Loring showed their $E$ is not an A$\mathcal{HD}$ algebra).
\end{remark}

\section*{Acknowledgements}


\end{document}